\numberwithin{equation}{section}
\newtheorem{cor}{\bf Corollary}[section]
\newtheorem{lemma}{\bf Lemma}[section]
\newtheorem{prop}{\bf Proposition}[section]
\newtheorem{thm}{\bf Theorem}[section]
\newtheorem{definition}{Definition}
\newtheorem{remark}{Remark}
\begin{document}
	\title{Formally Integrable  Structures I. Resolution of Solution Sheaves}
	\author{Qingchun Ji}
	\address{School of Mathematical Sciences, Fudan University, and Shanghai Center for Mathematical Sciences, Shanghai 200433, China}
	\email{qingchunji@fudan.edu.cn}
	\author{Jun Yao}
	\address{School of Mathematical Sciences, Fudan University, Shanghai 200433, China}
	\email{jyao\_fd@fudan.edu.cn}
	\author{Guangsheng Yu}
	\address{School of Mathematics and Statistics, Ningbo University, Ningbo 315211, China}
	\email{yuguangsheng@nbu.edu.cn}
	\thanks{This work was partially supported by National Natural Science Foundation of China, No. 12431005, No. 12271275 and Ningbo Youth Science and Technology Innovation Leading Talent Project, No. 2023QL014.}
	\subjclass[2020]{Primary 35A01; Secondary 35N10, 35A20, 35A27}
	\keywords{Resolution, Cousin type problems, local solvability, the Treves complex, Levi flat structures}
	\date{}

\begin{abstract}
	 This is the first of a series of papers on the $L^2$-theory for formally integrable structures. It is devoted to constructing a resolution of the  solution sheaf for a class of overdetermined systems introduced by L. H{\"o}rmander. A sufficient condition for global exactness is obtained, which leads to gluing techniques for local solutions formulated as Cousin type problems. In addition, we also prove the local solvability of the Treves complex for formally integrable structures with vanishing Levi forms, including Levi flat structures as special cases. To the best of the authors' knowledge, nothing more than the elliptic case is known about the local $L^2$-solvability of the Treves complex in the Levi flat case.
\end{abstract}
\maketitle

\section{Introduction}
Given a manifold $X$, various extensions of the classical Frobenius theorem and Newlander-Nirenberg theorem concern the local integrability of involutive  structures over $X$, i.e., formally integrable complex subbundles  of the complexified tangent bundle $\mathbb{C}TX$. The first significant attempt in this direction was made by L. Nirenberg(\cite{N}), a complex Frobenius theorem was established for arbitrary complex subbundle $E\subseteq\mathbb{C}TX$ under the assumptions that $E+\bar{E}$ is also a subbundle of $\mathbb{C}TX$ and that both $E$ and $E+\bar{E}$ are formally integrable, i.e.,
\begin{equation}
	\left[\Gamma(U,E),\Gamma(U,E)\right]\subseteq\Gamma(U,E),\left[\Gamma(U,E),\Gamma(U,\bar{E})\right]\subseteq\Gamma(U,E+\bar{E})\label{bracket}
\end{equation}
for any open subset $U\subseteq X$, where $\bar{E}$ is the complex-conjugate of $E$ and $[\cdot,\cdot]$ denotes the Lie bracket of vector fields. Such a subbundle $E$ is called a Levi flat structure.

Since the first exposition of the Levi flat structures(\cite{N}), there have been considerable developments and numerous new applications in this subject(see \cite{H1}, \cite{Web}, \cite{W}, \cite{HT}, \cite{G}, \cite{S} and references therein). Inspired by Levi flat structures, L. H{\"o}rmander introduced a class of first order overdetermined systems of partial differential equations(\cite{H1}) and established existence theorems. In \cite{Siu}, Y.-T. Siu pioneered the development of the multiplier ideal sheaf technique for general systems of partial differential equations. For $ x=(x_1,\cdots,x_n)\in \mathbb{R}^n $, we denote $ \partial/\partial x_\nu $ by $ \partial_\nu $ for $ 1\leq \nu\leq n $. To extend Nirenberg's complex Frobenius theorem, H{\"o}rmander(\cite{H1}) considered, under involutivity conditions motivated by $(\ref{bracket})$, the following general overdetermined system for the unknown $u$
\begin{equation}
	P_ju=f_j, 1\leq j\leq r,\label{OS}
\end{equation} 
where $P_j$'s are first order differential operators in an open subset $ \Omega\subseteq\mathbb{R}^n $ given by
\begin{equation}\label{operator}
	P_j=\sum_{\nu=1}^{n}a_j^\nu\partial_\nu+a_j^0,
\end{equation}
where $a_j^\nu\in C^{\infty}\left(\Omega\right)$ for $0\leq \nu\leq n,1\leq j \leq r$.



We will investigate the system \eqref{OS} from the perspective of the sheaf $\mathcal{S}_P$ of germs of locally square-integrable solutions of the homogeneous system $Pu=0$, where $P:=(P_1,\cdots,P_r)$ is the $r$-tuple of differential operators given by (\ref{operator}). 
Let $\mathcal{L}^{\oplus\tbinom{r}{q}}$ be a sheaf over $\Omega$ defined by (\ref{cp.}) for $0\leq q\leq r$, it's by definition a fine sheaf of Abelian groups. Section \ref{s2} is devoted to  constructing a differential complex over $\Omega$ 
\begin{equation}
	\mathcal{L}\stackrel{\mathcal{P}_1}{\longrightarrow}\mathcal{L}^{\oplus r}\stackrel{\mathcal{P}_2}{\longrightarrow}\mathcal{L}^{\oplus\tbinom{r}{2}}\stackrel{\mathcal{P}_3}{\longrightarrow}\mathcal{L}^{\oplus\tbinom{r}{3}}\stackrel{\mathcal{P}_4}{\longrightarrow}\cdots\stackrel{\mathcal{P}_r}{\longrightarrow}\mathcal{L}^{\oplus\tbinom{r}{r}}\longrightarrow 0,\label{resolution}
\end{equation} 
such that 
\begin{equation}
	\mathcal{P}_1= P,\label{P_1}
\end{equation} 
where $ \tbinom{r}{q}=r!/q!(r-q)! $ is the binomial coefficient. In this way, we eventually get a resolution of $\mathcal{S}_P$. Moreover, since $\mathcal{L}^{\oplus\tbinom{r}{q}}$ is a fine sheaf, the above resolution also implies isomorphisms for $1\leq q\leq r$
\begin{equation}
	H^q(\Omega,\mathcal{S}_P)\cong \frac{{\rm{Ker}}\left(L_{loc}^2\left(\Omega\right)^{\oplus\tbinom{r}{q}}\stackrel{\mathcal{P}_{q+1}}{\longrightarrow}L_{loc}^2\left(\Omega\right)^{\oplus\tbinom{r}{q+1}}\right)}{{\rm{Im}}\left(L_{loc}^2\left(\Omega\right)^{\oplus\tbinom{r}{q-1}}\stackrel{\mathcal{P}_{q}}{\longrightarrow}L_{loc}^2\left(\Omega\right)^{\oplus\tbinom{r}{q}}\right)},\label{ISO}
\end{equation}
where $L_{loc}^2\left(\Omega\right)$ is the space of complex valued functions on $\Omega$ which are locally square-integrable. We will consider global exactness of the previous resolution, i.e., the vanishing of $H^q(\Omega,\mathcal{S}_P)$ in Section \ref{s3}, as well as the local $ L^2 $-solvability and the smooth solvability in the sense of germs of the Treves complex, which follows as a consequence(Corollary \ref{lst}). There have been many deep developments on the local solvability of the Treves complex(see \cite{CH}, \cite{BCH}, \cite{T1,T1.5,T2} and references therein). In Section \ref{s4}, we will formulate additive and multiplicative Cousin problems for $P=(P_1,\cdots,P_r)$, and in this way provide a sheaf-theoretic approach to gluing local solutions of the overdetermined system (\ref{OS}) by solving Cousin type problems(Theorem \ref{Cousin}).

This paper aims to lay a foundation for the $L^2$-technique for general overdetermined systems of PDEs arising from the Levi flat structures, so we leave alone the invariance problem for the complex \eqref{resolution} and this has been sufficient for the applications to the local solvability of Treves complex and gluing techniques for overdetermined systems of PDEs. We will introduce the notion of canonical bundle for a formally integrable structure, which is a natural generalization of that of a complex structure, to handle the invariance problem for the complex \eqref{resolution} in a subsequent paper.

We adopt the summation convention over repeated indices. For a multi-index $ J=(j_1,\cdots,j_q) $ where $1\leq j_1, j_2,\cdots, j_q\leq r$, we call $q$ the length of $J$(denoted by $|J|=q$). A multi-index $J$ is called strictly increasing if $1\leq j_1<j_2<\cdots<j_q\leq r$. Without special explanation, summation is always taken over strictly increasing multi-indices. 
For a strictly increasing multi-index $ J $, $ s\in\lbrace1,\cdots,r\rbrace,\,j\in J $ and $ 
k\in\lbrace1,\cdots,r\rbrace\setminus J $, we introduce the following notations
$$J\setminus j:={\text{the strictly  increasing multi-index with components in}}\ J \setminus\lbrace j\rbrace, $$
$$Jk:= {\text{the strictly  increasing multi-index with components in}}\ J\cup\lbrace k\rbrace,$$
$$(k,J):= {\text{the number of components in}}\ J \ {\text{which are less than}}\ k, $$
\begin{equation*} 
	sJ:=(s,j_1,\cdots,j_q),\ \text{and}\ (j,sJ\setminus j):= \left\{\begin{array}{rcl}(j,J\setminus j)+1,  & \text{if} \ j\ne s, \\ 0, & \text{if} \ j=s.\end{array}\right.
\end{equation*}
For $1\leq q\leq r$, we write $\xi\in\mathbb{C}^{\tbinom{r}{q}}$ as $\xi=(\xi_J)_{|J|=q}$ where $J$ runs over multi-indices of length $q$ and components $\xi_J\in \mathbb{C}$ are anti-symmetric in $j_1,j_2,\cdots,j_q$. It is clear that $\xi\in\mathbb{C}^{\tbinom{r}{q}}$ is uniquely determined by these $\xi_J$ with strictly increasing $J$.  We will denote the $L^2$-inner product and the usual inner product of the complex Euclidean spaces by $(\cdot,\cdot)$ and $\langle\cdot,\cdot\rangle$ respectively.

\section{Construction of the differential complex}\label{s2}
When an $r$-tuple $P=(P_1,\cdots,P_r)$  of first order partial differential operators is normalized and in the normal Cartan form,  there is an explicit procedure for constructing resolutions of the solution sheaf $\mathcal{S}_P$ by using compatibility operators(\cite{T}). It is known that the non-existence of a compatibility operator is a consequence of the strong degeneracy of $P$. 

To construct the desired complex (\ref{resolution}), we make the following assumption on the considered differential operators $P_1,\cdots,P_r$. 

$\mathbf{(A1)}$ For all $ i,j,k\in\lbrace1,\cdots,r\rbrace $, \begin{equation}\label{integrability condition}
	\left[P_j,P_k\right]:=P_jP_k-P_kP_j=c_{jk}^lP_l \ \ \text{and} \ \
	c_{jk}^i=-c_{kj}^i,
\end{equation} 
where  $ c_{jk}^1,\cdots,c_{jk}^r\in C^{\infty}\left(\Omega\right) $ are complex valued functions. This is analogous to the first condition in (\ref{bracket}). The next assumption will play an important role in our construction of compatibility  operators.

$\mathbf{(A2)}$ For $1\leq k,l,k',l'\leq r$, 
\begin{equation}
    c_{kk'}^sc_{sl'}^l+c_{k'l'}^sc_{sk}^l+c_{l'k}^sc_{sk'}^l=p_{l'}c_{kk'}^l+p_kc_{k'l'}^l+p_{k'}c_{l'k}^l,\label{J}  
\end{equation} 
where $ p_j $ is the principal part of $ P_j $, i.e.,
\begin{equation}
	p_j=a_j^\nu(x)\partial_\nu.\label{p_j}
\end{equation}

\begin{remark}
	We have the following sufficient condition for $ (\ref{J}) $
\begin{equation}
	{\rm rank}_{\mathbb{C}}\left(a_j^\nu(x)\right)_{0\leq \nu\leq n\atop1\leq j\leq r}=r,\label{J'}
\end{equation} 
which holds for all $x\in\Omega$. By the following identity
\begin{align*}
	\left[P_k,\left[P_{k'},P_{l'}\right]\right]+\left[P_{k'},\left[P_{l'},P_{k}\right]\right]+\left[P_{l'},\left[P_k,P_{k'}\right]\right]&=P_kP_{k'}P_{l'}-P_kP_{l'}P_{k'}-P_{k'}P_{l'}P_k+P_{l'}P_{k'}P_k\\
	&+P_{k'}P_{l'}P_{k}-P_{k'}P_{k}P_{l'}-P_{l'}P_{k}P_{k'}+P_{k}P_{l'}P_{k'}\\
	&+P_{l'}P_kP_{k'}-P_{l'}P_{k'}P_k-P_kP_{k'}P_{l'}+P_{k'}P_kP_{l'}\\
	&=0,
\end{align*}
it follows that
\begin{align*}
	\left(c_{kk'}^sc_{sl'}^l-p_{l'}c_{kk'}^l+c_{k'l'}^sc_{sk}^l-p_kc_{k'l'}^l-c_{kl'}^sc_{sk'}^l+p_{k'}c_{kl'}^l\right)P_l\equiv0.
\end{align*}
Therefore, the condition $ (\ref{J}) $ follows from $ (\ref{J'}) $.
\end{remark}

Now, we use $ P_1,\cdots,P_r $ and assumptions (A1)-(A2) to construct a sequence of differential operators
\begin{equation}\label{complex}
	L_{loc}^2(U)\stackrel{\mathcal{P}_1}{\longrightarrow}L_{loc}^2(U)^{\oplus r}\stackrel{\mathcal{P}_2}{\longrightarrow}L_{loc}^2(U)^{\oplus\tbinom{r}{2}}\stackrel{\mathcal{P}_3}{\longrightarrow}\cdots\stackrel{\mathcal{P}_r}{\longrightarrow}L_{loc}^2(U)^{\oplus\tbinom{r}{r}}
\end{equation}
for every open subset $U\subseteq \Omega$ where each $\mathcal{P}_q :L_{loc}^2(U)^{\oplus\tbinom{r}{q-1}}\rightarrow L_{loc}^2(U)^{\oplus\tbinom{r}{q}}$ is a densely defined operator to be determined($1\leq q\leq r$).

For $ q=1 $, as indicated by (\ref{P_1}), we define $ \mathcal{P}_1u=\left(P_1u,\cdots,P_ru\right) $ for $u\in L_{loc}^2(U)$. To find $\mathcal{P}_2$ such that $\mathcal{P}_2\circ\mathcal{P}_1=0$, let's first consider $f=(f_j)_{1\leq j\leq r}$ given by $f_j=P_ju$ for some $u\in L_{loc}^2(U), 1\leq j\leq r$. By (\ref{integrability condition}), we have $$P_if_j=[P_i,P_j]u+P_jP_iu=c_{ij}^sf_s+P_jf_i,$$ which indicates the following compatibility operator $\mathcal{P}_2$ for $\mathcal{P}_1$,
$$ \mathcal{P}_2f=\left(P_if_j-P_jf_i-c_{ij}^sf_s\right)_{1\leq i,j\leq r}, \  f=(f_j)_{1\leq j\leq r}\in L_{loc}^2(U)^{\oplus r}, $$ 
where $ c_{mn}^s$'s  are functions appearing in ($ \ref{integrability condition}$), then it is easy to see $\mathcal{P}_2\circ\mathcal{P}_1=0$.

In general, for $ 1\leq q\leq r $, we define $ \mathcal{P}_q $ as follows
\begin{equation}\label{boundary operator}
	\mathcal{P}_qf=\bigg(\sum_{j\in J}(-1)^{(j,J\setminus j)}P_jf_{J\setminus j}
    -\sum_{m<n\atop m,n\in J}{\rm sgn}\tbinom{J}{mnJ\setminus\lbrace m,n\rbrace}
	c_{mn}^sf_{sJ\setminus\lbrace m,n\rbrace}\bigg)_{|J|=q},
\end{equation}
where $ f=(f_I)_{|I|=q-1}\in 
L_{loc}^2(U)^{\oplus\tbinom{r}{q-1}}$. In (\ref{boundary operator}), $ \mathcal{P}_q 
$ is understood as the maximal extension of its restriction to smooth vector-valued functions, and thereby a densely defined operator between Fr{\'e}chet spaces $L_{loc}^2(U)^{\oplus\tbinom{r}{q-1}}$ and $L_{loc}^2(U)^{\oplus\tbinom{r}{q}}$.

The sequence of differential operators in (\ref{complex}) gives rise to the following sequence of sheaf morphisms over $\Omega$,
\begin{equation}\label{reso}
	\mathcal{L}\stackrel{\mathcal{P}_1}{\longrightarrow}\mathcal{L}^{\oplus r}\stackrel{\mathcal{P}_2}{\longrightarrow}\mathcal{L}^{\oplus\tbinom{r}{2}}\stackrel{\mathcal{P}_3}{\longrightarrow}\mathcal{L}^{\oplus\tbinom{r}{3}}\stackrel{\mathcal{P}_4}{\longrightarrow}\cdots\stackrel{\mathcal{P}_r}{\longrightarrow}\mathcal{L}^{\oplus\tbinom{r}{r}},
\end{equation}
where $\mathcal{L}^{\oplus\tbinom{r}{q}}$ for $0\leq q\leq r$ is the sheaf generated by the following complete presheaf
\begin{align}\label{cp.}
	\mathcal{L}^{\oplus\tbinom{r}{q}}(U):=\left\{f\in L_{loc}^2(U)^{\oplus\tbinom{r}{q}}\ |\ \mathcal{P}_{q+1}f\in L_{loc}^2(U)^{\oplus\tbinom{r}{q+1}}\right\},
\end{align}
where $U$ is any open subset of $\Omega$. The complex (\ref{reso}) restricts to 
\begin{equation}
	\mathcal{A}\stackrel{\mathcal{P}_1}{\longrightarrow}\mathcal{A}^{\oplus r}\stackrel{\mathcal{P}_2}{\longrightarrow}\mathcal{A}^{\oplus\tbinom{r}{2}}\stackrel{\mathcal{P}_3}{\longrightarrow}\mathcal{A}^{\oplus\tbinom{r}{3}}\stackrel{\mathcal{P}_4}{\longrightarrow}\cdots\stackrel{\mathcal{P}_r}{\longrightarrow}\mathcal{A}^{\oplus\tbinom{r}{r}},\label{resolutionS}
\end{equation} 
where $\mathcal{A}$ is the sheaf of germs of smooth functions.

\begin{remark}
	If $ r=n $, and $ 
	P_j=\partial_j $ for $ 
	j\in\lbrace1,\cdots,n\rbrace $, then $ (\ref{resolutionS}) $ is the de Rham complex.
	If $ n $ is even, $ r=n/2 $, and $ 
	P_j=\partial_{\bar{z}_j} $ for $ j\in\lbrace1,\cdots,r\rbrace $, then 
	the complex $ (\ref{resolutionS}) $ is the 
	Dolbeault complex.
\end{remark}

First, we need to show that the sequence ($ \ref{resolution} $) is actually a complex, i.e., $ \mathcal{P}_{q+1}\circ\mathcal{P}_{q}=0 $.

\begin{prop}
	$ \mathcal{P}_{q+1}\circ\mathcal{P}_{q}=0 $ for $ 1\leq q\leq r$ where $\mathcal{P}_{r+1}:=0$.
\end{prop}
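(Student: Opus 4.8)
The plan is to recognize $(\ref{complex})$ as a Chevalley--Eilenberg type complex: the cochain complex of the "Lie algebroid" with frame $P_1,\dots,P_r$, bracket structure functions $c_{jk}^l$, and the tautological action of the $P_j$ on functions, with $(\ref{integrability condition})$ playing the role of flatness and $(\ref{J})$ the role of the Jacobi identity. First I would reduce to a pointwise statement: smooth vector-valued functions are dense in each $L_{loc}^2(U)^{\oplus\tbinom{r}{m}}$ and each $\mathcal{P}_q$ is the maximal extension of a first order differential operator, so it suffices to check $(\mathcal{P}_{q+1}\mathcal{P}_q f)_K=0$ for every smooth $f=(f_I)_{|I|=q-1}$ and every strictly increasing $K$ with $|K|=q+1$. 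Writing $g=\mathcal{P}_q f=(g_J)_{|J|=q}$ and substituting $(\ref{boundary operator})$ for $g_J$ into $(\ref{boundary operator})$ for $(\mathcal{P}_{q+1}g)_K$, I would apply the Leibniz rule $P_k(c_{mn}^s f_I)=(p_kc_{mn}^s)f_I+c_{mn}^sP_kf_I$ — valid because $c_{mn}^s$ is a function and $p_k$ is the derivation part of $P_k$, the zeroth order part $a_k^0$ staying with $c_{mn}^sP_kf_I$ — to split $(\mathcal{P}_{q+1}\mathcal{P}_q f)_K$ into a second order part $\Sigma_2$ (two $P$'s composed on a component of $f$), a first order part $\Sigma_1$ (one $P$ on a component of $f$ and one structure function), and a zeroth order part $\Sigma_0$ (two structure functions, or a $p_k$ applied to a structure function).

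The sum $\Sigma_2+\Sigma_1$ I expect to cancel using only (A1). For each unordered pair $\{k,k'\}\subseteq K$ the two ordered double-$P$ terms occur with opposite signs — this is exactly the bookkeeping identity for the exponents $(j,J\setminus j)$ and the permutation signs $\mathrm{sgn}\tbinom{J}{mnJ\setminus\{m,n\}}$ that already gives $d^2=0$ for the de Rham and Dolbeault complexes — so together they contribute $\pm[P_k,P_{k'}]f_{K\setminus\{k,k'\}}=\pm c_{kk'}^lP_lf_{K\setminus\{k,k'\}}$ by $(\ref{integrability condition})$. Absorbing this first order residue into $\Sigma_1$ and then collecting, for fixed $l\in\{1,\dots,r\}$ and fixed strictly increasing $M$ with $|M|=q-1$, the coefficient of $P_lf_M$, the three sources — the reduced double-$P$ leftover, the outer $P_k$ differentiating $f$ inside the structure-function part of $g_{K\setminus k}$, and the structure-function part of $\mathcal{P}_{q+1}$ multiplying the differential part of $g$ — cancel after tracking signs via $c_{mn}^s=-c_{nm}^s$ and the elementary identity relating $(j,J\setminus j)$, $(j,sJ\setminus j)$ and $\mathrm{sgn}\tbinom{J}{mnJ\setminus\{m,n\}}$. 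No hypothesis beyond (A1) is used here.

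The remaining identity $\Sigma_0=0$ is where (A2) enters, and I expect it to be the main obstacle. Here I would fix a triple $\{i,j,k\}\subseteq K$ and an index $l$, and collect the coefficient of $f_{l(K\setminus\{i,j,k\})}$ in $\Sigma_0$; the point to be verified is that, after the sign chase, this coefficient is $\pm$ the $l$-component of the ``Jacobiator'' $[[P_i,P_j],P_k]+[[P_j,P_k],P_i]+[[P_k,P_i],P_j]$, expanded by $(\ref{integrability condition})$ together with the Leibniz rule, namely (after relabelling $i,j,k\mapsto k,k',l'$) the difference of the two sides of $(\ref{J})$, which vanishes by (A2). Conceptually this step says precisely that the bracket determined by the structure functions $c_{jk}^l$ obeys the Jacobi identity modulo the principal symbols $p_j$, which is exactly what (A2) — equivalently, by the Remark, the rank condition $(\ref{J'})$ — guarantees; this is also where the presence of $p_j$ rather than $P_j$ in $(\ref{J})$ becomes essential, since it is the principal parts that show up when Leibniz is applied to the structure functions. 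The delicate part, for which I would budget the most care, is confirming that the direct expansion of $\mathcal{P}_{q+1}\mathcal{P}_q$ genuinely assembles the $c_{mn}^sc_{m'n'}^{s'}$-terms and the $(p_kc_{mn}^s)$-terms into complete Jacobiators with matching signs, since the nested notations $J\setminus j$ and $sJ\setminus j$ and the two sign conventions all interact.
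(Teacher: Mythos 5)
Your plan is essentially the paper's own proof: expand $\mathcal{P}_{q+1}\circ\mathcal{P}_q$ into the four cross terms, use the de Rham--type sign bookkeeping plus (A1) to cancel everything of first and second order in $f$, and use (A2) to kill the zeroth order part; the Chevalley--Eilenberg/Jacobiator gloss is a correct reading of what (A2) is doing. The one point your sketch of $\Sigma_0$ does not account for is the family of quadratic terms $c_{mn}^s c_{m''n''}^l f_{lsK\setminus\{m,n,m'',n''\}}$ in which the contracted index $s$ does \emph{not} reappear as a lower index of the second structure function (four distinct elements of $K$ removed, two new indices $l,s$ adjoined). These terms are present in $\Sigma_0$, can contribute to the coefficient of the very components $f_{l(K\setminus\{i,j,k\})}$ you propose to collect (e.g.\ when $s\in K$), and are not part of any Jacobiator; they must be cancelled separately, which the paper does by pairing $(m,n,s)\leftrightarrow(m'',n'',l)$ and using the antisymmetry $f_{ls\cdots}=-f_{sl\cdots}$ (the term $IV'$ in the paper's notation). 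Once that family is disposed of, the remaining zeroth order terms do assemble, triple by triple, into the two sides of $(\ref{J})$ exactly as you predict.
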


\begin{proof}
    Let $U\subseteq\Omega$ be an open subset and $ f=(f_I)_{|I|=q-1}\in 
	L_{loc}^2(U)^{\oplus\tbinom{r}{q-1}}$, it follows 
	from the definition (\ref{boundary operator}) that
	\begin{align}
		\mathcal{P}_{q+1}\circ\mathcal{P}_{q}f=&\bigg(\sum_{k\in K}(-1)^{(k,K\setminus k)}P_k(\mathcal{P}_qf)_{K\setminus k}-\!\!\!\sum_{m<n\atop m,n\in K}{\rm sgn}\tbinom{K}{mnK\setminus\lbrace m,n\rbrace}
		c_{mn}^s(\mathcal{P}_qf)_{sK\setminus\lbrace m,n\rbrace}\bigg)_{|K|=q+1}\notag\\
		=&\bigg(\sum_{k\in K}\sum_{j\in K\setminus k}(-1)^{(k,K\setminus 
		k)+(j,K\setminus\lbrace k,j\rbrace)}P_kP_jf_{K\setminus\lbrace 
		k,j\rbrace}\notag\\
    	&-\sum_{m<n\atop m,n\in K} {\rm sgn}\tbinom{K}{mnK\setminus\lbrace m,n\rbrace}c_{mn}^s\sum_{j\in sK\setminus\lbrace m,n\rbrace}(-1)^{(j,sK\setminus\lbrace m,n,j\rbrace)}P_jf_{sK\setminus\lbrace m,n,j\rbrace}\notag\\
    	&-\sum_{k\in K}(-1)^{(k,K\setminus k)}\sum_{m'<n'\atop m',n'\in K\setminus k}{\rm sgn}\tbinom{K\setminus k}{m'n'K\setminus\lbrace k,m',n'\rbrace}P_k(c_{m'n'}^lf_{lK\setminus\lbrace k,m',n'\rbrace})\notag\\   	
    	&+\sum_{m<n\atop m,n\in K}\sum_{m''<n''\atop m'',n''\in sK\setminus\lbrace m,n\rbrace}{\rm sgn}\tbinom{K}{mnK\setminus\lbrace m,n\rbrace}c_{mn}^s\notag\\
    	&\quad\times{\rm sgn}\tbinom{sK\setminus\lbrace 
    	m,n\rbrace}{m''n''sK\setminus\lbrace 
    	m,n,m'',n''\rbrace}c_{m''n''}^lf_{lsK\setminus\lbrace 
    	m,n,m'',n''\rbrace}\bigg)_{|K|=q+1}\notag\\
    	=:& I-II-III+IV\label{I-IV}.
    \end{align}
    We will handle these four terms $I\sim IV$ separately as follows.     
    \begin{align*}
    	I=&\sum_{k\in K}\sum_{j\in K\setminus k}\left((-1)^{(k,K\setminus k)+(j,K\setminus\lbrace j,k\rbrace)}P_kP_jf_{K\setminus\lbrace j,k\rbrace}\right)_{|K|=q+1}\nonumber\\
    	=&\sum_{k<j\atop j,k\in K}\left((-1)^{(k,K\setminus k)+(j,K\setminus\lbrace j,k\rbrace)}(P_kP_j-P_jP_k)f_{K\setminus\lbrace j,k\rbrace}\right)_{|K|=q+1}\nonumber\\
    	=&\sum_{m<n\atop m,n\in K}\left({\rm sgn}\tbinom{K}{mnK\setminus\lbrace m,n\rbrace}\left[P_m,P_n\right]f_{K\setminus\lbrace m,n\rbrace}\right)_{|K|=q+1}\nonumber\\
    	=&\sum_{m<n\atop m,n\in K}\left({\rm sgn}\tbinom{K}{mnK\setminus\lbrace m,n\rbrace}c_{mn}^sP_sf_{K\setminus\lbrace m,n\rbrace}\right)_{|K|=q+1}.
    \end{align*}
	The second line is valid since $ (k,K\setminus k)+(j,K\setminus\lbrace 
	j,k\rbrace)+1=(j,K\setminus j)+(k,K\setminus\lbrace j,k\rbrace) $, if $ 
	k<j$ and $j,k\in K $. The last line is an application of ($ 
	\ref{integrability condition} $). We rewrite $II$ as
	\begin{align*}
		II=&\bigg(\sum_{m<n\atop m,n\in K} {\rm sgn}\tbinom{K}{mnK\setminus\lbrace m,n\rbrace}c_{mn}^sP_sf_{K\setminus\lbrace m,n\rbrace}\bigg)_{|K|=q+1}\\
		&+\bigg(\sum_{m<n\atop m,n\in K} {\rm sgn}\tbinom{K}{mnK\setminus\lbrace m,n\rbrace}c_{mn}^l
		\sum_{j\in K\setminus\lbrace m,n\rbrace}(-1)^{(j,lK\setminus\lbrace m,n,j\rbrace)}P_jf_{lK\setminus\lbrace m,n,j\rbrace}\bigg)_{|K|=q+1}\\ =:&  II'+II'',
	\end{align*}
	which gives
	\begin{equation}
		I=II' \label{I-II}.
	\end{equation}
	Similarly, we decompose $III$ into two parts
	\begin{align*}
		III=&\bigg(\sum_{k\in K}(-1)^{(k,K\setminus k)}\sum_{m'<n'\atop m',n'\in K\setminus k}{\rm sgn}\tbinom{K\setminus k}{m'n'K\setminus\lbrace k,m',n'\rbrace}
		c_{m'n'}^lP_kf_{lK\setminus\lbrace k,m',n'\rbrace}\bigg)_{|K|=q+1}\notag\\
		&+\bigg(\sum_{k\in K}(-1)^{(k,K\setminus k)}\sum_{m'<n'\atop m',n'\in K\setminus 
		k}{\rm sgn}\tbinom{K\setminus k}{m'n'K\setminus\lbrace 
		k,m',n'\rbrace}p_k(c_{m'n'}^l)f_{lK\setminus\lbrace k,m',n'\rbrace}\bigg)_{|K|=q+1}\notag\\  =: &III'+III''\label{III}
	\end{align*}
	where we denote by $ p_j $ the principal part of $ P_j $. Since
    $$ \sum_{k\in K}\sum_{m'<n'\atop m',n'\in K\setminus k}c_{m'n'}^lP_kf_{lK\setminus\lbrace k,m',n'\rbrace}=\sum_{m<n\atop m,n\in K}\sum_{j\in K\setminus\lbrace m,n\rbrace}c_{mn}^lP_jf_{lK\setminus\lbrace j,m,n\rbrace},$$
    $II''$ has the same number of terms as $III'$. From     
    \begin{align*}
    	\tbinom{K}{mnK\setminus\lbrace k,m,n\rbrace}=&\tbinom{K}{kK\setminus 
    	k}\tbinom{kK\setminus k}{mnkK\setminus\lbrace 
    	k,m,n\rbrace}\tbinom{mnkK\setminus\lbrace 
    	k,m,n\rbrace}{mnK\setminus\lbrace m,n\rbrace}\\
        =&\tbinom{K}{kK\setminus k}\tbinom{kK\setminus k}{kmnK\setminus\lbrace 
        k,m,n\rbrace}\tbinom{kK\setminus\lbrace k,m,n\rbrace}{K\setminus\lbrace 
        m,n\rbrace}\\
        =&\tbinom{K}{kK\setminus k}\tbinom{K\setminus k}{mnK\setminus\lbrace 
        k,m,n\rbrace}\tbinom{kK\setminus\lbrace k,m,n\rbrace}{K\setminus\lbrace 
        m,n\rbrace},        
    \end{align*}
    and 
    \begin{equation*}
    	(-1)^{(k,lK\setminus\lbrace k,m,n\rbrace)}
    	=
    	{\rm sgn}\tbinom{lK\setminus\lbrace m,n\rbrace}{klK\setminus\lbrace 
    	k,m,n\rbrace}
    	=
    	-{\rm sgn}\tbinom{K\setminus\lbrace m,n\rbrace}{kK\setminus\lbrace 
    	k,m,n\rbrace},
    \end{equation*}
    it follows that
    $$ (-1)^{(k,K\setminus k)}{\rm sgn}\tbinom{K\setminus 
    k}{mnK\setminus\lbrace k,m,n\rbrace}
    =-(-1)^{(k,lK\setminus\lbrace m,n,k\rbrace)}{\rm 
    sgn}\tbinom{K}{mnK\setminus\lbrace m,n\rbrace}, $$
    which indicates that corresponding terms in $II''$ and $III'$ have opposite signs, and therefore 
    \begin{equation}
    	II''+III'=0 \label{II-III}.
    \end{equation}  
    For the last term $IV$, we have
    \begin{align*}
    	IV=&\bigg(\sum_{m<n\atop m,n\in K}\sum_{m''<n''\atop m'',n''\in K\setminus\lbrace 
    	m,n\rbrace}{\rm sgn}\tbinom{sK\setminus\lbrace 
    	m,n\rbrace}{m''n''sK\setminus\lbrace m,n,m'',n''\rbrace}\nonumber\\
    	&\qquad\qquad\qquad\qquad\qquad \times{\rm sgn}\tbinom{K}{mnK\setminus\lbrace 
    	m,n\rbrace}c_{mn}^sc_{m''n''}^lf_{lsK\setminus\lbrace 
    	m,n,m'',n''\rbrace}\bigg)_{|K|=q+1}\label{IV(1)}\notag\\
    	&+\bigg(\sum_{m<n\atop m,n\in K}\sum_{m''<n''\atop m'',n''\in sK\setminus\lbrace 
    	m,n\rbrace}{\rm sgn}\tbinom{sK\setminus\lbrace 
    	m,n\rbrace}{m''n''sK\setminus\lbrace m,n,m'',n''\rbrace}\nonumber\\
    	&\qquad\qquad\qquad\qquad\qquad \times{\rm sgn}\tbinom{K}{mnK\setminus\lbrace 
    	m,n\rbrace}c_{mn}^sc_{m''n''}^lf_{lsK\setminus\lbrace 
    	m,n,m'',n''\rbrace}\bigg)_{|K|=q+1}\notag\\ =: & IV'+IV''.
    \end{align*}
    It's obvious that 
    \begin{equation} 
    	IV'\label{IV'}=0,
    \end{equation} 
    indeed,
    
    \begin{align*}
    	IV'&=\bigg(\sum_{m<n\atop m,n\in K}\sum_{m''<n''\atop m'',n''\in K\setminus\lbrace 
    	m,n\rbrace}{\rm sgn}\tbinom{K}{mnK\setminus\lbrace 
    	m,n\rbrace}\tbinom{sK\setminus\lbrace 
    	m,n\rbrace}{m''n''sK\setminus\lbrace m,n,m'',n''\rbrace}\\
    	&\qquad\quad \times 
    	c_{mn}^sc_{m''n''}^l(f_{lsK\setminus\lbrace 
    	m,n,m'',n''\rbrace}+f_{slK\setminus\lbrace 
    	m,n,m'',n''\rbrace})\bigg)_{|K|=q+1}=0.
    \end{align*}

    Now, it remains to handle terms $ III'' $ and $ 
    IV'' $. For a given multi-index $ K $,  each 
    $ f_{lK\setminus\lbrace k,m',n'\rbrace} $  in $ III'' $ is exactly given by one of the following expressions(without loss of generality, assume $ 
    k<m'<n' $)
    \begin{equation}
    	\left\{
    	\begin{aligned}
    		&(-1)^{(n',K\setminus n')}{\rm sgn}\tbinom{K\setminus n'}{km'K\setminus\lbrace k,m',n'\rbrace}p_{n'}(c_{km'}^l)f_{lK\setminus\lbrace k,m',n'\rbrace},\\
    	    &(-1)^{(k,K\setminus k)}{\rm sgn}\tbinom{K\setminus k}{m'n'K\setminus\lbrace k,m',n'\rbrace}
    	    p_k(c_{m'n'}^l)f_{lK\setminus\lbrace k,m',n'\rbrace},\\
    		&(-1)^{(m',K\setminus m')}{\rm sgn}\tbinom{K\setminus m'}{kn'K\setminus\lbrace k,m',n'\rbrace}p_{m'}(c_{kn'}^l)f_{lK\setminus\lbrace k,m',n'\rbrace}.
    	\end{aligned}
        \right.\label{III''}
    \end{equation}
    The corresponding terms in $ IV''$ are given by 
    \begin{equation}
    	\left\{
    	\begin{aligned}
    		{\rm sgn}\tbinom{K}{km'K\setminus\lbrace 
    		k,m'\rbrace}{\rm sgn}\tbinom{sK\setminus\lbrace 
    		k,m'\rbrace}{sn'K\setminus\lbrace 
    		k,m',n'\rbrace}c_{km'}^sc_{sn'}^lf_{lK\setminus\lbrace 
    		k,m',n'\rbrace},\\
    		{\rm sgn}\tbinom{K}{m'n'K\setminus\lbrace 
    		m',n'\rbrace}{\rm sgn}\tbinom{sK\setminus\lbrace 
    		m',n'\rbrace}{skK\setminus\lbrace k,m',n'\rbrace} 
    		c_{m'n'}^sc_{sk}^lf_{lK\setminus\lbrace k,m',n'\rbrace},\\
    		{\rm sgn}\tbinom{K}{kn'K\setminus\lbrace 
    		k,n'\rbrace}{\rm sgn}\tbinom{sK\setminus\lbrace 
    		k,n'\rbrace}{sm'K\setminus\lbrace k,m',n'\rbrace} 
    		c_{kn'}^sc_{sm'}^lf_{lK\setminus\lbrace k,m',n'\rbrace}.
    	\end{aligned}
        \right.\label{IV''}
    \end{equation}
    
    From 
    \begin{align*}
    	\tbinom{K}{km'K\setminus\lbrace k,m'\rbrace}=&\tbinom{K}{n'K\setminus 
    	n'}\tbinom{n'K\setminus n'}{km'n'K\setminus\lbrace 
    	k,m',n'\rbrace}\tbinom{km'n'K\setminus\lbrace 
    	k,m',n'\rbrace}{km'K\setminus\lbrace k,m'\rbrace}\\
    	=&\tbinom{K}{n'K\setminus n'}\tbinom{n'K\setminus 
    	n'}{n'km'K\setminus\lbrace k,m',n'\rbrace}\tbinom{n'K\setminus\lbrace 
    	k,m',n'\rbrace}{K\setminus\lbrace k,m'\rbrace}\\
    	=&\tbinom{K}{n'K\setminus n'}\tbinom{K\setminus 
    	n'}{km'K\setminus\lbrace k,m',n'\rbrace}\tbinom{n'K\setminus\lbrace 
    	k,m',n'\rbrace}{K\setminus\lbrace k,m'\rbrace},      
    \end{align*}
    we know
    \begin{align}
    	{\rm sgn}\tbinom{K}{km'K\setminus\lbrace k,m'\rbrace}{\rm sgn}\tbinom{sK\setminus\lbrace 
    	k,m'\rbrace}{sn'K\setminus\lbrace k,m',n'\rbrace}
    	=&{\rm sgn}\tbinom{K}{n'K\setminus n'}{\rm sgn}\tbinom{K\setminus 
    	n'}{km'K\setminus\lbrace k,m',n'\rbrace}\notag\\ =& (-1)^{(n',K\setminus n')}{\rm sgn}\tbinom{K\setminus n'}{km'K\setminus\lbrace 
    	k,m',n'\rbrace}.\label{III-IV'}
    \end{align}
    Similarly, we have the following equalities for the other two parts
    \begin{equation}
    	{\rm sgn}\tbinom{K}{m'n'K\setminus\lbrace m',n'\rbrace}\tbinom{sK\setminus\lbrace 
    	m',n'\rbrace}{skK\setminus\lbrace k,m',n'\rbrace} =(-1)^{(k,K\setminus k)}{\rm sgn}\tbinom{K\setminus k}{m'n'K\setminus\lbrace k,m',n'\rbrace},\label{III-IV''}
    \end{equation}
    \begin{equation}
    	{\rm sgn}\tbinom{K}{kn'K\setminus\lbrace 
    	k,n'\rbrace}\tbinom{sK\setminus\lbrace 
    	k,n'\rbrace}{sm'K\setminus\lbrace k,m',n'\rbrace} =(-1)^{(m',K\setminus m')}{\rm sgn}\tbinom{K\setminus m'}{kn'K\setminus\lbrace k,m',n'\rbrace}.\label{III-IV'''}
    \end{equation}
    In the same manner, it is easy to see
    \begin{align}
    	{\rm sgn}\tbinom{K}{km'K\setminus\lbrace 
    	k,m'\rbrace}\tbinom{sK\setminus\lbrace 
    	k,m'\rbrace}{sn'K\setminus\lbrace k,m',n'\rbrace}=& {\rm 
    	sgn}\tbinom{K}{m'n'K\setminus\lbrace 
    	m',n'\rbrace}\tbinom{sK\setminus\lbrace 
    	m',n'\rbrace}{skK\setminus\lbrace k,m',n'\rbrace}\notag \\ =&-{\rm sgn}\tbinom{K}{kn'K\setminus\lbrace 
    	k,n'\rbrace}\tbinom{sK\setminus\lbrace 
    	k,n'\rbrace}{sm'K\setminus\lbrace k,m',n'\rbrace}.\label{III-IV''''}
    \end{align}
    From (\ref{III''})$\sim$(\ref{III-IV''''}) and the assumption (A2), it follows that
    \begin{equation}
    	III''=IV''.\label{L}
    \end{equation}
    Now, $ \mathcal{P}_{q+1}\circ\mathcal{P}_{q}f=0 $ is a consequence of (\ref{I-IV})$\sim$(\ref{IV'}) and (\ref{L}).		
\end{proof} 
\begin{remark} 
	It seems that the cases of $q\geq 2$ are different from  the case $q=1$, our proof depends on the assumption ${\rm (A2)}$ for $q\geq 2$. \end{remark}

We denote by $ ^{t}\mathcal{P}_q $ the formal adjoint of the differential operator $\mathcal{P}_q $. For later use, we need to compute the expression of $ ^{t}\mathcal{P}_q f$ in terms of derivatives of its components. For $ \xi, \eta\in\mathbb{C}^{\tbinom{r}{q}}$, let $ \langle \xi,\eta\rangle:=\sum_{|J|=q}\langle \xi_J,\eta_J\rangle$ where $\langle\cdot,\cdot\rangle$ on the right-hand side is the standard Hermitian inner product on $\mathbb{C}$. Given  $ f,g\in L_{loc}^2\left(\Omega\right)^{\oplus\tbinom{r}{q}}$ one of which has compact support, we denote
$ (f,g):=\int_{\Omega}\langle f,g\rangle=\sum_{|J|=q}(f_J,g_J) $. The formal adjoint $ ^{t}\mathcal{P}_q $ of $ \mathcal{P}_q $ is then determined by $ 
(\mathcal{P}_qg,f)=(g,{}^{t}\mathcal{P}_qf) $ for all  $(f,g)\in L_{loc}^2\left(\Omega\right)^{\oplus\tbinom{r}{q}}\times\mathcal{D}\left(\Omega\right)^{\oplus\tbinom{r}{q-1}}$ where $\mathcal{D}\left(\Omega\right)$ is the space of smooth functions with compact support in $\Omega$. We have the following formula of $^{t}\mathcal{P}_q$.

\begin{prop}
	For $ 1\leq q\leq r $ and $ f=(f_J)_{|J|=q}\in 
	L_{loc}^2\left(\Omega\right)^{\oplus\tbinom{r}{q}}$, 
	\begin{equation}\label{formal adjoint of boundary operator}
		^{t}\mathcal{P}_qf=\bigg(\sum_{j\notin I}{}^{t}P_jf_{jI}
		-\sum_{s\in I}\sum_{m<n\atop m,n\notin I\setminus s}(-1)^{(s,I\setminus s)}{\rm sgn}\tbinom{(I\setminus s)mn}{mnI\setminus s}\bar{c}_{mn}^sf_{(I\setminus{s})mn}\bigg)_{|I|=q-1},
	\end{equation}
	where $ ^{t}P_j $ is the formal adjoint of $ P_j (1\leq j\leq r)$.	
\end{prop}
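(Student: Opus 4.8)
The plan is to verify the defining relation $(\mathcal{P}_qg,f)=(g,{}^{t}\mathcal{P}_qf)$ directly: first for $g\in\mathcal{D}(\Omega)^{\oplus\tbinom{r}{q-1}}$ and smooth $f=(f_J)_{|J|=q}$ by integration by parts, and then extend to arbitrary $f\in L_{loc}^2(\Omega)^{\oplus\tbinom{r}{q}}$, which is automatic since the right-hand side of (\ref{formal adjoint of boundary operator}) is a differential operator applied to $f$ and the identity is preserved under the passage to distributional derivatives. No use of (A1)--(A2) is needed; the computation is purely formal. It is convenient to split $\mathcal{P}_q=\mathcal{P}_q'+\mathcal{P}_q''$, where $\mathcal{P}_q'f$ carries the first-order terms $\big(\sum_{j\in J}(-1)^{(j,J\setminus j)}P_jf_{J\setminus j}\big)_{|J|=q}$ and $\mathcal{P}_q''f$ the zeroth-order terms $-\big(\sum_{m<n\in J}{\rm sgn}\tbinom{J}{mnJ\setminus\{m,n\}}c_{mn}^sf_{sJ\setminus\{m,n\}}\big)_{|J|=q}$, and to compute the pairings $(\mathcal{P}_q'g,f)$ and $(\mathcal{P}_q''g,f)$ separately.

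For $(\mathcal{P}_q'g,f)$ I would move each $P_j$ onto $f_J$ through its formal adjoint, obtaining $\sum_{|J|=q}\sum_{j\in J}(-1)^{(j,J\setminus j)}(g_{J\setminus j},{}^{t}P_jf_J)$, and then re-index by putting $I:=J\setminus j$: as $J$ ranges over strictly increasing multi-indices of length $q$ containing $j$, the complement $I$ ranges over all strictly increasing multi-indices of length $q-1$ with $j\notin I$, and $J$ is the reordering of $jI$. The decisive point is a sign collapse: by the anti-symmetry convention $f_J=(-1)^{(j,I)}f_{jI}$ while $(j,J\setminus j)=(j,I)$, so $(-1)^{(j,J\setminus j)}f_J=f_{jI}$. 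Hence $(\mathcal{P}_q'g,f)=\sum_{|I|=q-1}\big(g_I,\sum_{j\notin I}{}^{t}P_jf_{jI}\big)$, which is precisely the pairing of $g$ against the first sum in (\ref{formal adjoint of boundary operator}).

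For $(\mathcal{P}_q''g,f)$ I would first move the coefficient $c_{mn}^s$ across the $L^2$-inner product, which produces the conjugate $\bar c_{mn}^s$, giving $-\sum_{|J|=q}\sum_{m<n\in J}{\rm sgn}\tbinom{J}{mnJ\setminus\{m,n\}}(g_{sJ\setminus\{m,n\}},\bar c_{mn}^sf_J)$ (summation over $s$ understood). Since $g_{sJ\setminus\{m,n\}}$ vanishes when $s$ is a repeated index, only $s\notin J\setminus\{m,n\}$ contributes, and one can then re-index the triple sum over $(J,m,n,s)$ by the triple sum over $(I,s,m,n)$ with $|I|=q-1$, $s\in I$ and $m<n\notin I\setminus s$, through the mutually inverse correspondences $I=$ reordering of $sJ\setminus\{m,n\}$ and $J=$ reordering of $(I\setminus s)\cup\{m,n\}$, keeping in mind that $s$ may coincide with $m$ or $n$. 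Under this correspondence $g_{sJ\setminus\{m,n\}}=(-1)^{(s,I\setminus s)}g_I$, and $f_J$ differs from $f_{(I\setminus s)mn}$ by the sign of the permutation that sorts the sequence $((I\setminus s),m,n)$; multiplying this sign by the structural factor ${\rm sgn}\tbinom{J}{mnJ\setminus\{m,n\}}$, i.e. composing the relevant sorting/reordering permutations and using that signs multiply, produces exactly ${\rm sgn}\tbinom{(I\setminus s)mn}{mnI\setminus s}$. Collecting terms gives the pairing of $g$ against the second sum in (\ref{formal adjoint of boundary operator}).

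I expect the zeroth-order part to be the main obstacle. One must pin down the re-indexing bijection precisely --- in particular the range $m<n\notin I\setminus s$ and the accounting when $s\in\{m,n\}$, where some of the $(-1)$ and ${\rm sgn}\tbinom{\cdot}{\cdot}$ factors degenerate --- and then check that the product of the three sign contributions (from $g_{sJ\setminus\{m,n\}}\mapsto g_I$, from ${\rm sgn}\tbinom{J}{mnJ\setminus\{m,n\}}$, and from $f_J\mapsto f_{(I\setminus s)mn}$) collapses to the single coefficient in (\ref{formal adjoint of boundary operator}). These manipulations are of the same elementary permutation-calculus type as the sign identities used in the proof of Proposition 2.1, so they present no essential difficulty beyond careful bookkeeping; the first-order part is then routine integration by parts, and the extension from smooth $f$ to $f\in L_{loc}^2(\Omega)^{\oplus\tbinom{r}{q}}$ is immediate.
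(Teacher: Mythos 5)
Your proposal is correct and follows essentially the same route as the paper: verify the defining relation $(\mathcal{P}_qg,f)=(g,{}^{t}\mathcal{P}_qf)$ for $g\in\mathcal{D}(\Omega)^{\oplus\tbinom{r}{q-1}}$ by integration by parts on the first-order terms and by the re-indexing $J\leftrightarrow (I,s,m,n)$ with the same sign bookkeeping on the zeroth-order terms (the paper merely passes through an intermediate index $I'=J\setminus\{m,n\}$ of length $q-2$ before setting $I=I's$, whereas you do it in one step). No gaps.
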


\begin{proof}
	Let $ g=(g_I)_{|I|=q-1}\in \mathcal{D}\left(\Omega\right)^{\oplus\tbinom{r}{q-1}}$, then we have
	\begin{align*}
		&(\mathcal{P}_qg,f)\\
		&=\sum_{|J|=q}\bigg(\sum_{j\in J}(-1)^{(j,J\setminus j)}P_jg_{J\setminus j}-\sum_{m<n\atop m,n\in J}{\rm sgn}\tbinom{J}{mnJ\setminus \lbrace m,n\rbrace}c_{mn}^sg_{sJ\setminus \lbrace m,n\rbrace}, f_J\bigg)\\
		&=\sum_{|J|=q}\bigg[\sum_{j\in J}(-1)^{(j,J\setminus j)}\left(P_jg_{J\setminus j},f_J\right)-\sum_{m<n\atop m,n\in J}{\rm sgn}\tbinom{J}{mnJ\setminus \lbrace m,n\rbrace}\left(c_{mn}^sg_{sJ\setminus \lbrace m,n\rbrace}, f_J\right)\bigg]\\
		&=\sum_{|I|=q-1}\sum_{j\notin I}(-1)^{(j,I)}\left(P_jg_{I},f_{Ij}\right)-\sum_{|I'|=q-2}\sum_{m<n\atop m,n\notin I'}{\rm sgn}\tbinom{I'mn}{mnI'}\left(c_{mn}^sg_{sI'}, f_{I'mn}\right)\\
		&=\sum_{|I|=q-1}\bigg[\sum_{j\notin I}(-1)^{(j,I)}\left(g_{I},{}^tP_jf_{Ij}\right)-\sum_{s\in I}\sum_{m<n\atop m,n\notin I\setminus s}{\rm sgn}\tbinom{(I\setminus s)mn}{mn(I\setminus s)}(-1)^{(s,I\setminus s)}\left(c_{mn}^sg_I, f_{(I\setminus s)mn}\right)\bigg]\\
		&=\sum_{|I|=q-1}\bigg(g_{I},\sum_{j\notin I}{}^tP_jf_{jI}-\sum_{s\in I}\sum_{m<n\atop m,n\notin I\setminus s}{\rm sgn}\tbinom{(I\setminus s)mn}{mn(I\setminus s)}(-1)^{(s,I\setminus s)}\bar{c}_{mn}^sf_{(I\setminus s)mn}\bigg)\\
		&=(g,{}^{t}\mathcal{P}_qf).
	\end{align*}
The third line follows from setting $ I=J\setminus j $ and $ I'=J\setminus\lbrace m,n\rbrace $. The fourth line is valid by letting $ I=I's $.
\end{proof}
\begin{remark}
	When $r\geq2$, we have the following explicit formulae of the formal adjoint operators $^{t}\mathcal{P}_q$ $(q=1,2)$:
	$^{t}\mathcal{P}_1f=\sum_{j=1}^{r}{}^{t}P_jf_{j}$ where $f=(f_j)_{1\leq j\leq r}\in L_{loc}^2\left(\Omega\right)^{\oplus r}$, and $^{t}\mathcal{P}_2f=\big(\sum_{j\neq i}{}^{t}P_jf_{ji}+\sum_{m<n} \bar{c}_{mn}^if_{mn}\big)_{1\leq i\leq r}$ where $f=(f_{ij})_{1\leq i<j\leq r}\in 
L_{loc}^2\left(\Omega\right)^{\oplus\tbinom{r}{2}}$.
\end{remark}

We conclude this section by relating (\ref{resolutionS}) to the Treves complex. Let $X$ be a manifold, and $E\subseteq \mathbb{C}TX$ be a formally integrable subbundle. For $q\geq 1$ and open subset $\Omega\subseteq X$, we denote by $$\mathfrak{N}_E^q\left(\Omega\right)=\left\{\omega\in A^q\left(\Omega\right) \ | \ \omega(X_1,\cdots,X_q)=0 \ \text{for} \ X_1,\cdots, X_q\in\Gamma(\Omega,E)\right\},$$ where $A^q\left(\Omega\right)$ is the space of complex smooth $q$-forms on $\Omega$. By the formal integrablity of $E$, it follows from Cartan's magic formula
\begin{align}\label{property of exterior diff.}
	{\rm 
	d}\omega\left(X_1,\cdots,X_{q+1}\right)=&\sum_{j=1}^{q+1}(-1)^{j+1}X_j\left(\omega\left(X_1,\cdots,\hat{X}_j\cdots,X_{q+1}\right)\right)\nonumber\\
	&+\sum_{j<k}(-1)^{j+k}\omega\left(\left[X_j,X_k\right],X_1,\cdots,\hat{X}_j,\cdots,\hat{X}_k,\cdots,X_{q+1}\right),
\end{align}
that $\mathfrak{N}_E^{\bullet}\left(\Omega\right)$ is preserved by the exterior derivative, i.e.,
\begin{equation*}
	{\rm d}\mathfrak{N}_E^q\left(\Omega\right)\subseteq \mathfrak{N}_E^{q+1}\left(\Omega\right).
\end{equation*} 
Treves(\cite{T1}) introduced the following complex as the quotient complex of the de Rahm complex
\begin{equation}\label{new complex}
	C^\infty\left(\Omega\right)\stackrel{{\rm d}'}{\longrightarrow}\mathfrak{U}_E^1\left(\Omega\right)\stackrel{{\rm d}'}{\longrightarrow}\mathfrak{U}_E^2\left(\Omega\right)\stackrel{{\rm d}'}{\longrightarrow}\mathfrak{U}_E^3\left(\Omega\right)\stackrel{{\rm d}'}{\longrightarrow}\cdots,
\end{equation}
where $$\mathfrak{U}_E^q\left(\Omega\right):=\frac{A^q\left(\Omega\right)}{\mathfrak{N}_E^q\left(\Omega\right)}, \ q\geq 1.$$

We choose a frame field $\lbrace P_1,\cdots,P_r, L_1,\cdots,L_m\rbrace $ of $\mathbb{C}TX$ over a coordinate chart $\Omega$, such that vector fields $P_1,\cdots,P_r$ (i.e., first order differential operators without zero order terms) span $E$ over $\Omega$, and let $ \lbrace 
\omega_1,\cdots,\omega_r,\theta_1,\cdots,\theta_m\rbrace $ be the dual frame of $\mathbb{C}T^*X$. Given $ f=\sum_{|I|=q-1}f_I\omega_I\in A^{q-1}\left(\Omega\right)$,
\begin{equation}
	{\rm d}f \equiv \sum_{|J|=q}\bigg(\sum_{j\in J}(-1)^{(j,J\setminus j)}P_jf_{J\setminus j}\bigg) \omega_J+\sum_{|I|=q-1}f_I {\rm d} \omega_I \ 
	{\rm mod} \ \mathfrak{N}_E^{q}\left(\Omega\right).\label{d}
\end{equation}
By (\ref{property of exterior diff.}), we also have 
\begin{align*}
	{\rm d} \omega_I\left(P_{i_1},\cdots,P_{i_q}\right)=&\sum_{j=1}^{q+1}(-1)^{j+1}P_{i_j}\left(\omega_I\left(P_{i_1},\cdots,\hat{P}_{i_j}\cdots,P_{i_{q+1}}\right)\right)\\
	&+\sum_{j<k}(-1)^{j+k}\omega_I\left(\left[P_{i_j},P_{i_k}\right],P_{i_1},\cdots,\hat{P}_{i_j},\cdots,\hat{P}_{i_k},\cdots,P_{i_q}\right)\\
	=&\sum_{j<k}(-1)^{j+k}c_{i_ji_k}^s\omega_I\left(P_s,P_{i_1},\cdots,\hat{P}_{i_j},\cdots,\hat{P}_{i_k},\cdots,P_{i_q}\right),
\end{align*}
which, in conjunction with (\ref{d}), implies
\begin{equation}
	{\rm d}'\bigg(f \ {\rm mod} \ \mathfrak{N}_E^{q-1}\left(\Omega\right)\bigg)=\sum_{|J|=q}\bigg(\mathcal{P}_qF\bigg)_J\omega_J \ {\rm mod} \ \mathfrak{N}_E^{q}\left(\Omega\right),\label{T1}
\end{equation}
where $F:=\big(f_I\big)_{|I|=q-1}\in C^\infty\left(\Omega\right)^{\oplus\tbinom{r}{q-1}}$. 
In particular, 
\begin{equation}
	{\rm d}'\bigg(f \ {\rm mod} \ \mathfrak{N}_E^{q-1}\left(\Omega\right)\bigg)=0 \ {\rm if \ and \ only \ if} \  \mathcal{P}_qF=0.\label{T2}
\end{equation}
Hence, the Treves complex (\ref{new complex}) can be locally realized as our previous complex (\ref{resolutionS}) where $P_1,\cdots,P_r$ are complex smooth vector fields(i.e., without zero order terms), and this enables us to consider the local solvability of the Treves complex(Corollary \ref{lst}) by $L^2$-theory of the complex (\ref{resolution}).

\section {Global exactness of the differential complex}\label{s3}
In this section, we will provide a criterion for the global exactness of (\ref{resolution}), which consequently leads to its local exactness. Following \cite{H1}, we make a further assumption on the differential operator $P=(P_1,\cdots,P_r)$.

$\mathbf{(A3)}$ There are functions $ d_{jk}^l $ and $ e_{jk}^l $ in $ C^{\infty}\left(\Omega\right) $ for 
all $ j,k\in\lbrace1,\cdots,r\rbrace $ such that 
\begin{equation}\label{condition of the Lie braket}
	\left[p_j,\bar{p}_k\right]=d_{jk}^lp_l-e_{jk}^l{}\bar{p}_l,
\end{equation}
where $p_j$ is the principal part of $P_j$ defined by (\ref{p_j}) and $ \bar{p}_j $ is the operator obtained by conjugating $ p_j $'s coefficients, i.e.,
\begin{equation*}
	\bar{p}_j=\bar{a}_j^\nu(x)\partial_\nu \ {\rm for} \ 1\leq j\leq r.
\end{equation*}

\begin{remark}
	The coefficients $ d_{jk}^l $ and $ e_{jk}^l $ are 
	not uniquely determined.
\end{remark}		

By using $p_j,\bar{p}_j(1\leq j\leq r)$, we define the following quadratic form for every $C^2$-function on $\Omega$ which was originally introduced by H{\"o}rmander(\cite{H2}).

\begin{definition}\label{Quadratic Form}
	Let $ \varphi\in C^2\left(\Omega\right) $ be a real valued function, $x\in \Omega$, the quadratic form $Q_{\varphi,x}$ on $ \mathbb{C}^{r}$ is defined as
	\begin{equation}
        Q_{\varphi,x}(\xi,\xi):={\rm Re}\left(p_j\bar{p}_k\varphi(x)+e_{jk}^l\bar{p}_l\varphi(x)\right)\xi_{j}\bar{\xi}_{k}	\label{quadratic form}
    \end{equation}
    for all $ \xi=(\xi_j)_{1\leq j\leq r}\in\mathbb{C}^{r} $. 
\end{definition}
\begin{remark}
$(1)$ Since $\varphi$ is real valued, it follows from $(\ref{condition of the Lie braket})$ that for every  $ \xi=(\xi_j)_{1\leq j\leq r}\in\mathbb{C}^{r}$
\begin{align*}
	0={\rm Re}\left[{\xi}_jp_j,\bar{\xi}_k\bar{p}_k\right]\varphi(x)={\rm Re}\:d_{jk}^lp_l\varphi(x)\xi_{j}\bar{\xi}_{k}-{\rm Re}\:e_{jk}^l\bar{p}_l\varphi(x)\xi_{j}\bar{\xi}_{k},
\end{align*}
which shows that substituting $e_{jk}^l\bar{p}_l$ with $d_{jk}^lp_l$ in Definition \ref{Quadratic Form} yields the same quadratic form $Q_{\varphi,x}$. \\
$(2)$ Let $E\subseteq \mathbb{C}T\Omega$ denote the subbundle spanned by the principal parts $p_1,\cdots,p_r$, the quadratic form $Q_{\varphi,x}$ depends only on $E$ if $E+\bar{E}$ also forms a subbundle$($i.e., has constant rank$)$. In this sense, $Q_{\varphi,x}$ can be viewed as an analogue of the Hessian of the Bott connection for a foliation structure.
\end{remark}
For each $1\leq q\leq r$, $Q_{\varphi,x}$ induces a quadratic form $Q_{q,\varphi,x}$ on $ \mathbb{C}^{\binom{r}{q}}$
by 
\begin{equation*}
	Q_{q,\varphi,x}(\xi,\xi):=\sum_{|I|=q-1}Q_{\varphi,x}(\xi_{(I)},\xi_{(I)})
\end{equation*} 
for all $\xi=(\xi_J)_{|J|=q}\in\mathbb{C}^{\binom{r}{q}}$, where $\xi_{(I)}:=(\xi_{1I},\cdots,\xi_{rI})^T\in\mathbb{C}^r$ for any multi-index $I$ of length $q-1$.

\begin{lemma}\label{relationship of eigenvalues}
	For a real valued function $ \varphi\in C^2\left(\Omega\right) $, $x\in \Omega, 1\leq q\leq r$, denote by $ \lbrace\lambda_j\rbrace_{j=1}^r $ the eigenvalues of the quadratic form $Q_{\varphi,x}$$($w.r.t. the standard inner product on $\mathbb{C}^r$$ ) $, then the eigenvalues$ ( $w.r.t. the standard inner product on $\mathbb{C}^{\binom{r}{q}}$$ ) $ of the quadratic form $ Q_{q,\varphi,x} $ are exactly given by $ \lambda_J:=\sum_{j\in J}\lambda_j $, indexed by  strictly increasing multi-indices $J$ of length $q$.
\end{lemma}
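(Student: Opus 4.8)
The plan is to diagonalize $Q_{\varphi,x}$ on $\mathbb{C}^r$ and then track how the induced form $Q_{q,\varphi,x}$ decomposes. First I would fix $x$ and choose a unitary basis $\{u_1,\dots,u_r\}$ of $\mathbb{C}^r$ consisting of eigenvectors of $Q_{\varphi,x}$, so that $Q_{\varphi,x}(\eta,\eta)=\sum_{j=1}^r\lambda_j|\langle\eta,u_j\rangle|^2$ for $\eta\in\mathbb{C}^r$. The key observation is that the map $\xi=(\xi_J)_{|J|=q}\mapsto(\xi_I)_{|I|=q-1}$, sending an antisymmetric $q$-tensor to the $\binom{r}{q-1}$-tuple of its ``contracted'' vectors $\xi_I=(\xi_{1I},\dots,\xi_{rI})^T\in\mathbb{C}^r$, realizes $\mathbb{C}^{\binom{r}{q}}$ as a subspace of $\bigl(\mathbb{C}^r\bigr)^{\oplus\binom{r}{q-1}}$, and by definition $Q_{q,\varphi,x}$ is the restriction to this subspace of the orthogonal direct sum $\bigoplus_{|I|=q-1}Q_{\varphi,x}$. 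So the eigenvalues of $Q_{q,\varphi,x}$ are obtained by finding an orthonormal eigenbasis of $\mathbb{C}^{\binom{r}{q}}$ adapted to this embedding.

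The natural candidates for eigenvectors are the ``wedge-type'' tensors built from the $u_j$: for each strictly increasing multi-index $J=(j_1<\dots<j_q)$, let $e_J\in\mathbb{C}^{\binom{r}{q}}$ be the antisymmetric tensor whose components are $(e_J)_{k_1\cdots k_q}=\det\bigl(\langle u_{j_a},f_{k_b}\rangle\bigr)$ in terms of an auxiliary coordinate frame — more cleanly, pass to the coordinates in which $\{u_j\}$ is the standard basis, where $e_J$ is simply the elementary antisymmetric tensor supported on permutations of $J$. These $\{e_J\}_{|J|=q}$ form an orthonormal basis of $\mathbb{C}^{\binom{r}{q}}$ (up to normalization). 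I would then compute, for $\xi=e_J$, the contracted vectors $\xi_I$: one checks that $\xi_I$ is nonzero only when $I\subseteq J$ with $|I|=q-1$, i.e. $I=J\setminus\{j\}$ for some $j\in J$, and in that case $\xi_I$ is a scalar multiple of the single basis vector $u_j$ (with sign $(-1)^{(j,J\setminus j)}$). Feeding this into $Q_{q,\varphi,x}(e_J,e_J)=\sum_{|I|=q-1}Q_{\varphi,x}(\xi_I,\xi_I)$ collapses the sum to $\sum_{j\in J}\lambda_j\cdot\|u_j\|^2\cdot(\text{normalization})=\lambda_J$, and a parallel bilinear computation gives $Q_{q,\varphi,x}(e_J,e_{J'})=0$ for $J\ne J'$. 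Since the $\{e_J\}$ are orthonormal and span, this exhibits them as an eigenbasis with eigenvalues $\lambda_J=\sum_{j\in J}\lambda_j$, proving the lemma.

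The main obstacle I anticipate is purely bookkeeping: getting the antisymmetrization, the sign conventions $(-1)^{(j,J\setminus j)}$, and the normalization constants (factors of $q!$ or $(q-1)!$ from summing over permutations versus strictly increasing indices) to cancel consistently, so that the $e_J$ come out genuinely orthonormal and the contraction formula has exactly the right scalar. It is cleanest to work from the start in the eigen-coordinates where $Q_{\varphi,x}=\mathrm{diag}(\lambda_1,\dots,\lambda_r)$, reducing everything to an identity about the standard antisymmetric tensors on $\mathbb{C}^r$; then no determinant manipulation is needed and the computation of $\xi_I$ for $\xi=e_J$ is immediate. I would present the argument in those coordinates, remarking that unitary change of basis preserves both the standard inner products on $\mathbb{C}^r$ and $\mathbb{C}^{\binom{r}{q}}$ and the contraction map, so no generality is lost.
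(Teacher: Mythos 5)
Your proposal is correct and is essentially the paper's own argument: the paper also diagonalizes $Q_{\varphi,x}$ by a unitary $A$, transports everything through the induced unitary $A_q$ on $\mathbb{C}^{\binom{r}{q}}$, and reads off the eigenvalues $\lambda_J=\sum_{j\in J}\lambda_j$; your elementary antisymmetric tensors $e_J$ are precisely the images $A_q\epsilon_J$ of the standard basis, so verifying they are eigenvectors is the same computation packaged differently. The one point you defer to a closing remark --- that the unitary change of basis is compatible with the contraction $\xi\mapsto(\xi_I)_{|I|=q-1}$, i.e.\ $\sum_{|I|=q-1}Q_{\varphi,x}(\eta_I,\eta_I)=\sum_{|L|=q-1}Q_{\varphi,x}(A\xi_L,A\xi_L)$ for $\eta=A_q\xi$ --- is exactly where the paper spends its effort (two rounds of summation using $\sum_i a_{il}\overline{a_{im}}=\delta_{lm}$ and the conversion between strictly increasing and unrestricted multi-indices via $(q-1)!$), so in a written-up version that step should be carried out rather than asserted.
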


\begin{proof}
	Fix some $x\in\Omega$, find an $r$ by $r$ unitary matrix $A=(a_{ij})_{1\leq i,j\leq r}$ which diagonalizes $Q_{\varphi,x}$, i.e.,
	\begin{equation}
	Q_{\varphi,x}(A\xi,A\xi)=\sum_{1\leq j\leq r}\lambda_j|\xi_j|^2,  \ {\rm for} \ \xi=(\xi_j)_{1\leq j \leq r}\in\mathbb{C}^r.\label{diag}
    \end{equation}
	Let $A_q$ be the unitary transform on $\mathbb{C}^{\tbinom{r}{q}}$ which is induced by $A$ as follows
	\begin{align*}
		A_q:\ \mathbb{C}^{\tbinom{r}{q}}&\longrightarrow \mathbb{C}^{\tbinom{r}{q}}\\
		(\xi)_{j_1\cdots j_q}&\mapsto(A_q\xi)_{k_1\cdots k_q}:=\sum_{1\leq j_1,\cdots,j_q\leq r}a_{k_1j_1}\cdots a_{k_qj_q}\xi_{j_1\cdots j_q}
	\end{align*}
	for all $\xi=(\xi_J)_{|J|=q}\in\mathbb{C}^{\tbinom{r}{q}}$. Set $\eta=A_q\xi\in\mathbb{C}^{\tbinom{r}{q}}$, then for every multi-index $I=(i_1,\cdots,i_{q-1})$
	\begin{equation}
		\eta_{(I)}=\sum_{1\leq l_1,\cdots,l_{q-1}\leq r}a_{i_1l_1}\cdots a_{i_{q-1}l_{q-1}}A\xi_{(L)}\in\mathbb{C}^r,\label{U}
	\end{equation}
    or equivalently 
	\begin{equation}
		A\xi_{(I)}=\sum_{1\leq l_1,\cdots,l_{q-1}\leq r} a_{l_1i_1}\cdots a_{l_{q-1}i_{q-1}}\eta_{(L)}\in\mathbb{C}^r.\label{U'}
    \end{equation}
	From (\ref{diag})$\sim$(\ref{U}), it follows that 
	\begin{align*}
		Q_{q,\varphi,x}(\eta,\eta)=&\sum_{|I|=q-1}Q_{\varphi,x}(\eta_{(I)},\eta_{(I)})\\=&\sum_{\begin{subarray}{c}|I|=q-1\\1\leq l_1,\cdots,l_{q-1}\leq r\\ 1 \leq m_1,\cdots,m_{q-1}\leq r \end{subarray}}Q_{\varphi,x}(A\xi_{(L)},A\xi_{(M)})a_{i_1l_1}\cdots a_{i_{q-1}l_{q-1}}\overline{a_{i_1m_1}\cdots a_{i_{q-1}m_{q-1}}}\\ 
		=&  
        \sum_{\begin{subarray}{c}1\leq l_1,\cdots,l_{q-1}\leq r\\ 1 \leq m_1,\cdots,m_{q-1}\leq r \end{subarray}}\frac{Q_{\varphi,x}(A\xi_{(L)},A\xi_{(M)})}{(q-1)!}\sum_{1\leq i_1\cdots,i_{q-1}\leq r}a_{i_1l_1}\cdots a_{i_{q-1}l_{q-1}}\overline{a_{i_1m_1}\cdots a_{i_{q-1}m_{q-1}}} \\ =&\sum_{|L|=q-1}Q_{\varphi,x}(A\xi_{(L)},A\xi_{(L)})\\
        =&\sum_{|L|=q-1\atop 1\leq l\leq r}\lambda_l|(A\xi_{(L)})_l|^2.
    \end{align*}
    To go back from $\xi$ to $\eta$, we substitute (\ref{U'}) into the above identity
    \begin{align*}
    	Q_{q,\varphi,x}(\eta,\eta)=& \sum_{|L|=q-1\atop 1\leq l\leq r}\lambda_l\bigg(\sum_{1\leq j_1,\cdots,j_{q-1}\leq r\atop1\leq k_1,\cdots,k_{q-1}\leq r}a_{j_1l_1}\cdots a_{j_{q-1}l_{q-1}}\overline{a_{k_1l_1}\cdots a_{k_{q-1}l_{q-1}}}\eta_{lJ}\overline{\eta_{lK}}\bigg)\\ =& \sum_{\begin{subarray}{c}1\leq l\leq r\\1\leq j_1,\cdots,j_{q-1}\leq r\\ 1 \leq k_1,\cdots,k_{q-1}\leq r \end{subarray}}\frac{\lambda_l\eta_{lJ}\overline{\eta_{lK}}}{(q-1)!}\bigg(\sum_{1\leq l_1,\cdots,l_{q-1}\leq r}a_{j_1l_1}\cdots a_{j_{q-1}l_{q-1}}\overline{a_{k_1l_1}\cdots a_{k_{q-1}l_{q-1}}}\bigg) \\ =& \sum_{|K|=q-1\atop 1\leq l\leq r}\lambda_l|\eta_{lK}|^2\\
    	=& \sum_{|J|=q}\lambda_J|\eta_{J}|^2,
    \end{align*}
    where $\lambda_J=\sum_{j\in J}\lambda_j$.
\end{proof}

We will prove the following global exactness result for the complex (\ref{resolution}).
\begin{thm}\label{exactness}
	 Suppose that $P=(P_1,\cdots,P_r)$ satisfies assumptions ${\rm (A1)}\sim\text{\rm (A3)}$. For $1\leq q\leq r$, if $\Omega$ has an exhaustion function $\varphi\in C^2\left(\Omega\right)$ such that the quadratic form $Q_{\varphi,x}$ is positive semi-definite and ${\rm rank}_{\mathbb{C}}Q_{\varphi,x}\geq r-q+1$ for every $x\in \Omega$,
	then for any $ f\in L_{loc}^2\left(\Omega\right)^{\oplus\tbinom{r}{q}}$ satisfying $ \mathcal{P}_{q+1}f=0 $, there is some $ u\in L_{loc}^2\left(\Omega\right)^{\oplus\tbinom{r}{q-1}}
	 $ such that $ \mathcal{P}_{q}u=f $.
\end{thm}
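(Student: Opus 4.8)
The plan is to reduce global exactness to an $L^2$-estimate of Hörmander type and then run the standard weighted $L^2$-existence machinery on the operators $\mathcal{P}_q$. First I would work locally: on a relatively compact open subset $U \Subset \Omega$ one should establish, for a suitable smooth weight $\psi$, an a priori inequality of the form
\begin{equation*}
	\|\mathcal{P}_q u\|_{\psi}^2 + \|{}^t\mathcal{P}_{q-1} u\|_{\psi}^2 \;\gtrsim\; \int_U Q_{q,\psi,x}(u,u)\,e^{-\psi}
\end{equation*}
for $u$ in the intersection of the domains of $\mathcal{P}_q$ and the adjoint, after the usual integration-by-parts computation of the commutators $[P_j, {}^tP_k]$. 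Here the explicit formula for ${}^t\mathcal{P}_q$ from Proposition~2.2 is what makes the bookkeeping tractable: the combinatorial signs in $\mathcal{P}_q$ and ${}^t\mathcal{P}_q$ are arranged exactly so that the cross terms assemble into the quadratic form $Q_{q,\varphi,x}$ introduced above, and assumption (A3) is precisely what is needed to control the error terms $d_{jk}^l p_l - e_{jk}^l \bar p_l$ coming from the commutator $[p_j,\bar p_k]$. The lower-order coefficients $a_j^0$ and $c_{jk}^l$ contribute only bounded error terms, absorbable by enlarging $\psi$.

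Second, I would invoke Lemma~3.1: the hypothesis that $Q_{\varphi,x}$ is positive semi-definite with ${\rm rank}_{\mathbb C} Q_{\varphi,x} \ge r-q+1$ means that for every strictly increasing $J$ of length $q$ at least one eigenvalue $\lambda_j$ with $j \in J$ is positive; hence every eigenvalue $\lambda_J = \sum_{j\in J}\lambda_j$ of $Q_{q,\varphi,x}$ is strictly positive, so $Q_{q,\varphi,x}$ is positive definite on $\mathbb{C}^{\binom{r}{q}}$ at each point. Combined with the exhaustion property of $\varphi$, one chooses $\psi = \varphi + \chi$ with $\chi$ a rapidly growing convex function of $\varphi$ so that the right-hand side of the basic estimate dominates $C\|u\|_\psi^2$ with $C$ as large as we like near infinity; this is the standard device for passing from a semi-definite form to a genuine weighted estimate on all of $\Omega$. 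The estimate then yields, by the Hahn–Banach / Riesz argument (i.e. the abstract existence theorem for $d$-type operators in Hilbert space), that $\mathcal{P}_q u = f$ is solvable in the weighted space $L^2(\Omega, e^{-\psi})$ whenever $\mathcal{P}_{q+1} f = 0$; since weights are locally bounded above and below, the solution lies in $L^2_{loc}(\Omega)^{\oplus\binom{r}{q-1}}$.

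The main obstacle I expect is the a priori estimate itself — specifically, verifying that the integration by parts on the first-order terms of $\mathcal{P}_q$ and ${}^t\mathcal{P}_q$ produces exactly $Q_{q,\psi,x}$ plus controllable lower-order terms, keeping careful track of the anti-symmetrization and of the contributions of the structure functions $c_{jk}^l$ and their derivatives. A secondary technical point is the density/closure issue: $\mathcal{P}_q$ is defined as a maximal (closed) extension, so one must verify that smooth compactly supported vector-valued functions are dense in the graph norm of $\mathcal{P}_q^* \cap \mathcal{P}_{q+1}$, which is the usual Friedrichs-mollifier argument but needs the coefficients $a_j^\nu$ to be smooth (as assumed) so that mollification commutes with $P_j$ up to bounded errors. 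Once these two points are in place, the rest is the textbook $L^2$-existence scheme, and local exactness follows by applying the theorem to a neighborhood basis of pseudoconvex-type sets adapted to $Q_{\varphi,x}$.
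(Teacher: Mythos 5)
Your proposal follows essentially the same route as the paper: a Bochner-type a priori estimate obtained by integrating by parts and commuting $[p_j,\bar p_k]$ via (A3), the eigenvalue computation of Lemma \ref{relationship of eigenvalues} together with the rank hypothesis to make $Q_{q,\varphi,x}$ strictly positive, a rapidly growing convex reweighting $\chi(\varphi)$ exploiting the exhaustion property, the density-in-graph-norm lemma, and the abstract Riesz/Hahn--Banach existence lemma. The only blemish is an off-by-one in the displayed a priori inequality: to solve $\mathcal{P}_q u=f$ one needs the estimate on degree-$q$ test elements $g$, namely $\|{}^t\mathcal{P}_q g\|_\psi^2+\|\mathcal{P}_{q+1}g\|_\psi^2\gtrsim\int Q_{q,\psi,x}(g,g)e^{-\psi}$, not the pair $(\mathcal{P}_q,{}^t\mathcal{P}_{q-1})$ as written.
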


Let's first recall a variant of the Riesz representation theorem(\cite{H3}) which is very useful for solving overdetermined systems.
\begin{lemma}\label{functional analysis}
	Let $ T:H_1\longrightarrow H_2 $ and $ S:H_2\longrightarrow H_3 $ be closed, densely defined linear operators such that $ {\rm Im}T\subseteq {\rm Ker}S $. Let $C>0$ be a constant, if for any $g\in{\rm Dom}\left(T^*\right)$ we have
	\begin{align*}
		|\left(g,f\right)_{H_2}|\leq C\|T^*g\|_{H_1},
	\end{align*}
	then there exists $u\in H_1$ such that $Tu=f$ and $\|u\|_{H_1}\leq C\|f\|_{H_2}$. In particular, assumme that
	\begin{equation*}
		\|g\|_{H_2}^2\leq C^2\left(\|T^*g\|_{H_1}^2+\|Sg\|_{H_3}^2\right),\quad  g\in {\rm Dom}\left(T^*\right)\cap{\rm Dom}\left(S\right).
	\end{equation*}
	If $Sf=0$, then there exists $u\in H_1$ such that $Tu=f$ and $\|u\|_{H_1}\leq C\|f\|_{H_2}$.
\end{lemma}

We will apply the above lemma to Hilbert spaces
\begin{equation}
	H_1=L_{\phi_1}^2\left(\Omega\right)^{\oplus\tbinom{r}{q-1}},  \ H_2=L_{\phi_2}^2\left(\Omega\right)^{\oplus\tbinom{r}{q}}, \ H_3 =L_{\phi_3}^2\left(\Omega\right)^{\oplus\tbinom{r}{q+1}}\label{Hil}
\end{equation}
where $1\leq q\leq r$ and $H_3:=0$ if $q=r$, and
\begin{equation}
	S=\text{the maximal extension of}\ \mathcal{P}_{q+1}, \ \ T=\text{the maximal extension of}\ \mathcal{P}_{q},\label{Ope}
\end{equation}
where $\phi_1,\phi_2,\phi_3\in C\left(\Omega\right)$ are real valued functions. The weighted $L^2$-space $L_\phi^2\left(\Omega\right)$ for a real valued function $\phi\in C\left(\Omega\right)$ is defined as  $$ L_\phi^2\left(\Omega\right)=\bigg\{ f\in L^2_{loc}\left(\Omega\right) \ \big| \ \int_\Omega |f(x)|e^{-\phi}<+\infty\bigg\}.$$We denote by $(\cdot,\cdot)_\phi$ the inner product of $L_\phi^2\left(\Omega\right)$.

Repeating the same argument in \cite{H1}(Lemma 4 on page 430), we have 
\begin{lemma}\label{dense lemma}
	Fix a sequence of real valued functions $\lbrace\eta_\nu\rbrace_{\nu=1}^{\infty} $  in $\mathcal{D}\left(\Omega\right) $ such that $ 0\leq\eta_\nu\leq1 $ for all $ \nu $, and $ \eta_\nu=1 $ on any compact subset of $ \Omega $ when $ \nu $ is large. Suppose that there is a real valued function $\tau\in C^\infty\left(\Omega\right)$ such that 
	\begin{equation}\label{tau}
		|\sigma(\mathcal{P}_{q})(\cdot,{\rm d}\eta_\nu)|^2\leq e^{\tau}\ \text{and}\ \ |\sigma(\mathcal{P}_{q+1})(\cdot,{\rm d}\eta_\nu)|^2\leq e^{\tau},\ \nu=1,2,\cdots
	\end{equation}
    hold on $\Omega$ where $ \sigma(\mathcal{P}_{q}) $ denotes the principle symbol of $ \mathcal{P}_{q} $. Then $ \mathcal{D}(\Omega)^{\oplus\tbinom{r}{q}}$ is dense in $ {\rm Dom}\left(T^*\right)\cap {\rm Dom}\left(S\right) $ for the graph norm
    \begin{equation*}
    	f\rightarrow\|f\|_{H_2}+\|T^*f\|_{H_1}+\|Sf\|_{H_3},
    \end{equation*} where $H_1,H_2,H_3$ and $S,T$ are defined by $ (\ref{Hil}) $ and $ (\ref{Ope}) $ with weight functions $\phi_1,\phi_2,\phi_3 \in C\left(\Omega\right)$ satisfying $\phi_3-\phi_2=\phi_2-\phi_1=\tau$.
\end{lemma}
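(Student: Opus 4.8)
The plan is to follow Hörmander's argument (Lemma 4 of \cite{H1}) and separate the statement into two independent parts: the construction of the majorant $\tau$, and a two-stage approximation in the graph norm. For the first part, recall that $\mathcal{P}_q$ is first order with coefficients built from the $a_j^\nu$, so its principal symbol $\sigma(\mathcal{P}_q)(x,\xi)$ is linear in $\xi$ through the quantities $a_j^\nu(x)\xi_\nu$; in particular $x\mapsto|\sigma(\mathcal{P}_q)(x,d\eta_\nu(x))|^2$ is smooth for each $\nu$. The key observation is that $g(x):=\sup_\nu|\sigma(\mathcal{P}_q)(x,d\eta_\nu(x))|^2$ is locally finite: given $x_0$ with compact neighborhood $K$, the hypothesis gives $\eta_\nu\equiv1$ on $K$ for all $\nu\geq N(K)$, whence $d\eta_\nu$ vanishes on the interior of $K$ for such $\nu$, and near $x_0$ the supremum reduces to a maximum of the finitely many smooth functions indexed by $\nu<N(K)$. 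Thus $g$ is locally bounded, and a partition-of-unity argument furnishes $\tau\in C^\infty(\Omega)$ with $e^\tau\geq g$. Enlarging $\tau$ if necessary, and using that the $\mathcal{P}_\bullet$ share the same coefficient data, we may arrange simultaneously $|\sigma(\mathcal{P}_q)(\cdot,d\eta_\nu)|^2\leq e^\tau$ and $|\sigma(\mathcal{P}_{q+1})(\cdot,d\eta_\nu)|^2\leq e^\tau$ for every $\nu$.

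For the density statement, fix $f\in\mathrm{Dom}(T^*)\cap\mathrm{Dom}(S)$ and work with the weights satisfying $\phi_3-\phi_2=\phi_2-\phi_1=\tau$. The first stage replaces $f$ by the compactly supported $\eta_\nu f$. Since $T^*$ is the Hilbert-space adjoint for the weighted pairings, $T^*v=e^{\phi_1}\,{}^t\mathcal{P}_q(e^{-\phi_2}v)$, so that $T^*(\eta_\nu f)$ and $S(\eta_\nu f)$ differ from $\eta_\nu T^*f$ and $\eta_\nu Sf$ by zeroth order commutator terms of the shape
\[
e^{\phi_1-\phi_2}\,\sigma({}^t\mathcal{P}_q)(\cdot,d\eta_\nu)\,f\in H_1,\qquad \sigma(\mathcal{P}_{q+1})(\cdot,d\eta_\nu)\,f\in H_3,
\]
respectively. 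Using $\phi_1-\phi_2=-\tau$, $\phi_3=\phi_2+\tau$ and the bounds from the first part, these terms are estimated by
\[
\int e^{-2\tau}|\sigma(\cdot,d\eta_\nu)|^2|f|^2e^{-\phi_1}\leq\int|f|^2e^{-\phi_2},\qquad \int|\sigma(\cdot,d\eta_\nu)|^2|f|^2e^{-\phi_3}\leq\int|f|^2e^{-\phi_2},
\]
both finite since $f\in H_2=L^2_{\phi_2}$; hence $\eta_\nu f\in\mathrm{Dom}(T^*)\cap\mathrm{Dom}(S)$. Because $d\eta_\nu(x)\to0$ pointwise (each point is eventually in the region where $\eta_\nu\equiv1$) while $e^\tau|f|^2e^{-\phi_2}$ serves as an integrable dominant, dominated convergence drives the two commutator terms to $0$; together with $\eta_\nu T^*f\to T^*f$, $\eta_\nu Sf\to Sf$ and $\eta_\nu f\to f$ in the respective spaces, this gives $\eta_\nu f\to f$ in the graph norm.

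The second stage reduces a compactly supported $f$ to an element of $\mathcal{D}(\Omega)^{\oplus\tbinom{r}{q}}$. For a standard mollifier $\rho_\epsilon$ one has $f*\rho_\epsilon\in\mathcal{D}(\Omega)^{\oplus\tbinom{r}{q}}$ for small $\epsilon$, and Friedrichs' commutator lemma, applied to the first order operators $T^*$ and $S$ with their smooth weighted coefficients, yields $T^*(f*\rho_\epsilon)\to T^*f$ in $H_1$ and $S(f*\rho_\epsilon)\to Sf$ in $H_3$, while $f*\rho_\epsilon\to f$ in $H_2$. Composing the two stages produces a sequence in $\mathcal{D}(\Omega)^{\oplus\tbinom{r}{q}}$ converging to $f$ in the graph norm, which is the assertion.

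I expect the main obstacle to be the first stage: one must confirm that both zeroth order commutator terms lie in the correct weighted spaces \emph{and} vanish in the limit, and it is precisely this requirement that forces the relations $\phi_2-\phi_1=\phi_3-\phi_2=\tau$, since the weight increments are tuned so that the symbol factor bounded by $e^\tau$ is absorbed and each integrand is dominated by $|f|^2e^{-\phi_2}$. The delicate bookkeeping occurs on the adjoint side, where the extra factor $e^{\phi_1-\phi_2}=e^{-\tau}$ must be tracked; by contrast, once that is settled, the mollification stage is routine given Friedrichs' lemma.
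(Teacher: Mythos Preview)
Your proposal is correct and follows precisely the approach the paper intends: the paper does not give an independent proof but simply states ``Repeating the same argument in \cite{H1} (Lemma 4 on page 430)'', and what you have written is exactly that argument---local finiteness of the supremum to build $\tau$, cutoff by $\eta_\nu$ with the weight relations $\phi_3-\phi_2=\phi_2-\phi_1=\tau$ absorbing the symbol commutators, and Friedrichs mollification. Your observation that one also needs $|\sigma(\mathcal{P}_{q+1})(\cdot,d\eta_\nu)|^2\leq e^\tau$ for the $S$-side is a useful clarification the paper leaves implicit.
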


\begin{proof}
	For any $f\in{\rm Dom}\left(S\right)$,
	\begin{align*}
		S(\eta_\nu f)-\eta_\nu Sf=\sigma(\mathcal{P}_{q+1})(\cdot,{\rm d}\eta_\nu)f,
	\end{align*}
	by (\ref{tau}) we have
	\begin{align*}
		|S(\eta_\nu f)-\eta_\nu Sf|^2e^{-\phi_3}\leq|f|^2e^{-\phi_2},
	\end{align*}
	According to the dominated convergence theorem, 
	\begin{align*}
		\|S(\eta_\nu f)-\eta_\nu Sf\|_{\phi_3}\longrightarrow0\ \text{when}\ \nu\rightarrow+\infty.
	\end{align*}
	
	Since $\eta_\nu f\in{\rm Dom}\left(T^*\right)$ for each $\eta_\nu$ if $f\in{\rm Dom}\left(T^*\right)$, it follows that for any $u\in{\rm Dom}\left(T\right)$
	\begin{align*}
		|\left(T^*(\eta_\nu f)-\eta_\nu T^*f,u\right)_{\phi_1}|&=|\left(f,\eta_\nu Tu-T(\eta_\nu u)\right)_{\phi_2}|\\
		&=|\left(f,\sigma(\mathcal{P}_{q})(\cdot,{\rm d}\eta_\nu)u\right)_{\phi_2}|\\
		&\leq\int_{\Omega}|f|e^{-\frac{\phi_2}{2}}|u|e^{-\frac{\phi_1}{2}},
	\end{align*}
	where the last line holds again by (\ref{tau}). This implies
	\begin{align*}
		|T^*(\eta_\nu f)-\eta_\nu T^*f|^2e^{-\phi_1}\leq|f|^2e^{-\phi_2},
	\end{align*}
	which we conclude by dominated convergence that for any $f\in{\rm Dom}\left(T^*\right)$,
	\begin{align*}
		\|T^*(\eta_\nu f)-\eta_\nu T^*f\|_{\phi_1}\longrightarrow0\ \text{when}\ \nu\rightarrow+\infty.
	\end{align*}
	Thus, $\eta_\nu f\longrightarrow f$ in the graph norm if $f\in {\rm Dom}\left(T^*\right)\cap {\rm Dom}\left(S\right)$. The proof is thus completed by Friedrichs' lemma in \cite{H4}.
\end{proof}

\begin{remark}
	Note that one can always find smooth function $\tau$ satisfying $(\ref{tau})$, since this condition means only a finite number of lower bounds for $e^\tau$ on any compact subset of $\Omega$. 
\end{remark}

For later use in the proof of Theorem \ref{exactness}, we establish the Bochner type formula as follows.

\begin{prop}\label{bochner}
	For any $f\in{\rm Dom}\left(T^*\right)\cap{\rm Dom}\left(S\right)$ and $\epsilon>0$ we have
	\begin{align*}
		&{\rm Re}\sum_{|I|=q-1}\sum_{j,k\notin 
			I}\left(\left(p_j\bar{p}_k\phi_3+e_{jk}^l\bar{p}_l\phi_3\right)f_{kI},f_{jI}\right)_{\phi_3}\\
		\leq 
		&\left(1+\epsilon\right)\left(\|T^*f\|_{\phi_1}^2+\|Sf\|_{\phi_3}^2\right)+\left(2+\frac{2}{\epsilon}\right)\|\sigma\left({}^t\mathcal{P}_q\right)\left(\cdot\,,{\rm d}\tau\right)f\|_{\phi_3}^2+\frac{1}{\epsilon}\|\kappa f\|_{\phi_3}^2,
	\end{align*}
	where  $\tau$ is a function satisfying $(\ref{tau})$, $\kappa$ is a ploynomial in $|c_{ij}^k|,|d_{ij}^k|,|e_{ij}^k|,|\nabla e_{ij}^k|$, $|\nabla a_j^\nu|$ and $|\nabla^2a_j^\nu|$ with coefficients in $\mathbb{R}_{\geq0}(1\leq i,j,k\leq r,0\leq\nu\leq n)$. 
\end{prop}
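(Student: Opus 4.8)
The plan is to derive a pointwise Bochner–Kodaira–Hörmander type identity for the operator $\mathcal{P}_q$ acting on test forms, exactly in the spirit of Hörmander's $L^2$-machinery for $\bar\partial$, and then to carry it over to general $f\in{\rm Dom}(T^*)\cap{\rm Dom}(S)$ by the density statement of Lemma \ref{dense lemma}. First I would reduce to $f\in\mathcal{D}(\Omega)^{\oplus\tbinom{r}{q}}$: by Lemma \ref{dense lemma} test forms are dense in the graph norm, and every term appearing in the claimed inequality is continuous in that norm provided the "error" terms are set up with the harmless polynomial $\kappa$ in the coefficients (this is where the $\nabla a_j^\nu$, $\nabla^2 a_j^\nu$, $\nabla e_{ij}^k$ etc.\ enter — they come from commuting $p_j$ past $\bar p_k$, from the zero-order parts $a_j^0$ of the $P_j$, and from the structure functions $c_{ij}^k$). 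So the whole proof is a computation for smooth compactly supported $f$.

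For such $f$, I would start from the algebraic identity
$$\|T^*f\|_{\phi_1}^2+\|Sf\|_{\phi_3}^2=\|{}^t\mathcal{P}_q f\|_{\phi_1}^2+\|\mathcal{P}_{q+1}f\|_{\phi_3}^2$$
(after absorbing the weights; note $\phi_2-\phi_1=\phi_3-\phi_2=\tau$ so passing between $({}^t\mathcal{P}_q)_{\phi}$ and the unweighted formal adjoint only costs terms involving $d\tau$, i.e.\ the $\sigma({}^t\mathcal{P}_q)(\cdot,d\tau)f$ term). Using the explicit formulas (\ref{boundary operator}) for $\mathcal{P}_{q+1}$ and (\ref{formal adjoint of boundary operator}) for ${}^t\mathcal{P}_q$, expand both squared norms. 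The leading terms are, schematically, $\sum_{I,j,k\notin I}\big(\,{}^tP_j \text{-type terms}\,\big)$ and after integration by parts the principal contributions combine into
$$\sum_{|I|=q-1}\sum_{j,k\notin I}\big((\bar p_k P_j-P_j\bar p_k)f_{kI},f_{jI}\big)+\text{(divergence/curvature terms)},$$
and the commutator $[\,\bar p_k,P_j\,]$, computed via (A3) (equation (\ref{condition of the Lie braket})) plus the zero-order parts of $P_j$, produces precisely the first-order operator $p_j\bar p_k\phi_3+e_{jk}^l\bar p_l\phi_3$ hitting the weight $\phi_3$, which is the left-hand side of the claimed inequality, together with lower-order remainders bounded by $\|\kappa f\|_{\phi_3}$. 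The structure-function ($c_{ij}^k$) cross terms between $\mathcal{P}_{q+1}$ and ${}^t\mathcal{P}_q$ contribute only zero-order-in-$f$ quantities and are likewise swept into $\kappa$.

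Finally I would bookkeep the constants: the unavoidable cross terms of the form $2{\rm Re}(\text{good},\text{error})$ are handled by Cauchy–Schwarz with a parameter, $2{\rm Re}(a,b)\le \epsilon\|a\|^2+\epsilon^{-1}\|b\|^2$ — this is the source of the $(1+\epsilon)$ in front of $\|T^*f\|^2+\|Sf\|^2$, the $(2+2/\epsilon)$ in front of the $\sigma({}^t\mathcal{P}_q)(\cdot,d\tau)f$ term (the factor $2$ because $d\tau$ appears when converting between the two weights on both sides), and the $\epsilon^{-1}$ in front of $\|\kappa f\|^2$. I expect the main obstacle to be purely combinatorial: tracking the sign factors ${\rm sgn}\tbinom{\cdot}{\cdot}$ and the index conventions through the double expansion so that the principal terms assemble into the single clean sum $\sum_{I}\sum_{j,k\notin I}$ over complementary indices rather than a tangle of overlapping multi-index sums — essentially the same type of bookkeeping already carried out in Proposition 2.1, here applied to $\|{}^t\mathcal{P}_q f\|^2+\|\mathcal{P}_{q+1}f\|^2$ after one integration by parts. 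Once the principal part is identified, everything else is a finite sum of terms each manifestly dominated by one of the three quantities on the right, with the coefficient polynomial $\kappa$ simply defined to be the (finite, nonnegative-coefficient) sum of all the moduli of coefficients that show up.
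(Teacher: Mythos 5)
Your plan follows essentially the same route as the paper: reduce to test forms via Lemma \ref{dense lemma}, expand $\|T^*f\|_{\phi_1}^2+\|Sf\|_{\phi_3}^2$ (the paper works with the intermediate quantity $A_1+A_2$, bounded from above by Cauchy--Schwarz with parameter $\epsilon$ and from below by integration by parts), identify the commutator $\left[P_j,{}^tP_{k,\phi_3}\right]$ via (A3), and sweep the remainders into $\kappa$. The one point your sketch glosses over is that the first-order errors $d_{jk}^lp_lf_{kI}$ and $e_{jk}^l\bar p_lf_{kI}$ produced by substituting (A3) are \emph{not} bounded by $\|\kappa f\|_{\phi_3}$ alone, contrary to your claim that all such remainders are zero-order; the paper absorbs them using the positive term $B_1=\sum_{|J|=q}\sum_{j=1}^r\|p_jf_J\|_{\phi_3}^2$ that the integration-by-parts identity (\ref{a1+a2}) produces on the favorable side of the estimate.
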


\begin{proof}
	Let $ ^tP_{j,\phi_3} $ be the formal adjoint of $ P_j $ with respect to the inner product $ (\cdot,\cdot)_{\phi_3} $, i.e., 
	$$ ^tP_{j,\phi_3}=e^{\phi_3}\circ {}^tP_j\circ e^{-\phi_3}. $$ 
	From the expressions in (\ref{boundary operator}) and (\ref{formal adjoint of boundary operator}) for $f\in\mathcal{D}\left(\Omega\right)^{\oplus\tbinom{r}{q}}$ we read off 
	\begin{align}
		T^*f&=e^{\phi_1}\circ{}^{t}\mathcal{P}_q\circ e^{-\phi_2}f\nonumber\\
		&=e^{-\tau}\bigg(\sum_{j\notin I}{}^tP_{j,\phi_3}f_{jI}\bigg)_{|I|=q-1}+e^{-\tau}\sigma\left({}^t\mathcal{P}_q\right)\left(\cdot\,,{\rm d}\tau\right)f+e^{-\tau}\kappa_1 f,\label{T*m}\\ 
		Sf&=\bigg(\sum_{k\in K}(-1)^{(k,K\setminus k)}P_kf_{K\setminus k}\bigg)_{|K|=q+1}+\kappa_1 f,\label{Sm}
	\end{align}
	where we denote by $\kappa_1$ various polynomials in $c_{ij}^k$ with constant coefficients for $1\leq i,j,k\leq r$. Set
	\begin{align*}
		A_1&=\sum_{|I|=q-1}\bigg\|\sum_{j\notin I}{}^tP_{j,\phi_3}f_{jI}\bigg\|_{\phi_3}^2,\\
		A_2&=\sum_{|K|=q+1}\bigg\|\sum_{k\in K}(-1)^{(k,K\setminus k)}P_kf_{K\setminus k}\bigg\|_{\phi_3}^2.
	\end{align*} 
	We will proceed by estimating $A_1+A_2$ from both sides. In view of (\ref{T*m}) and (\ref{Sm}), it is easy to see that for any $\epsilon>0 $
	\begin{align}\label{upper}
		A_1+A_2\leq& \left(1+\epsilon\right)\left(\|T^*f\|_{\phi_1}^2+\|Sf\|_{\phi_3}^2\right)\nonumber\\
		&+\left(2+\frac{2}{\epsilon}\right)\|\sigma\left({}^t\mathcal{P}_q\right)\left(\cdot\,,{\rm d}\tau\right)f\|_{\phi_3}^2+\frac{1}{\epsilon}\|\kappa f\|_{\phi_3}^2.
	\end{align} 
	Here and hereafter, we denote by $\kappa$ various polynomials in $|c_{ij}^k|$, $|d_{ij}^k|$, $|e_{ij}^k|$,  $|\nabla e_{ij}^k|$, $|\nabla a_j^\nu|$ and $|\nabla^2a_j^\nu|$ with coefficients in $\mathbb{R}_{\geq0}$ where $1\leq i,j,k\leq r,\ 0\leq\nu\leq n$. We rewrite  $ A_1+A_2 $ by integration by parts as follows
	\begin{align}
		A_1+A_2
		=&\sum_{|I|=q-1}\sum_{j,k\notin 
			I}\left({}^tP_{k,\phi_3}f_{kI},{}^tP_{j,\phi_3}f_{jI}\right)_{\phi_3}+\sum_{|K|=q+1}\sum_{k\in
			K}\|P_kf_{K\setminus k}\|_{\phi_3}^2\notag\\
		&+\sum_{|K|=q+1}\sum_{j,k\in K,j\neq k}(-1)^{(k,K\setminus k)+(j,K\setminus j)}\left({}^tP_{k,\phi_3}P_jf_{K\setminus j},f_{K\setminus k}\right)_{\phi_3}\notag\\
		=&\sum_{|I|=q-1}\sum_{j,k\notin 
			I}\left(P_j{}^tP_{k,\phi_3}f_{kI},f_{jI}\right)_{\phi_3}+\sum_{|J|=q}\sum_{k\notin
			J}\|P_kf_J\|_{\phi_3}^2\notag\\
		&-\sum_{|I|=q-1}\sum_{j,k\notin I,j\neq k}\left({}^tP_{k,\phi_3}P_jf_{kI},f_{jI}\right)_{\phi_3}\notag\\
		=&\sum_{|I|=q-1}\sum_{j,k\notin 
			I}\left(P_j{}^tP_{k,\phi_3}f_{kI},f_{jI}\right)_{\phi_3}+\sum_{|J|=q}\sum_{k\notin
			J}\|P_kf_J\|_{\phi_3}^2\notag\\  
		&-\sum_{|I|=q-1}\sum_{j,k\notin
			I}\left({}^tP_{k,\phi_3}P_jf_{kI},f_{jI}\right)_{\phi_3}+\sum_{|I|=q-1}\sum_{j\notin
			I}\left({}^tP_{j,\phi_3}P_jf_{jI},f_{jI}\right)_{\phi_3}\notag\\
		=&\sum_{|I|=q-1}\sum_{j,k\notin 
			I}\left(\left[P_j,{}^tP_{k,\phi_3}\right]f_{kI},f_{jI}\right)_{\phi_3}+\sum_{|J|=q}\sum_{j=1}^{r}\|P_jf_J\|_{\phi_3}^2.\label{a1+a2}
	\end{align}
	In the second equality, we changed the indices of summation by letting $ 
	J=K\setminus k,\,I=K\setminus{\lbrace j,k\rbrace} $, and making use of the identity $ (-1)^{(j,Ik)+(k,Ij)}=-(-1)^{(j,I)+(k,I)} $ for $ j,k\notin I,\,j\neq 
	k $. The equality (\ref{a1+a2}) allows us to estimate $A_1+A_2$ from below
	\begin{align}\label{inequality}
		{\rm Re}\sum_{|I|=q-1}\sum_{j,k\notin I}\left(-\left[p_j,F_{k}\right]f_{kI},f_{jI}\right)_{\phi_3}+B_1\leq A_1+A_2+\|\kappa f\|_{\phi_3}^2,
	\end{align}
	where $F_k:=e^{\phi_3}\circ\bar{p}_k\circ e^{-\phi_3}$ and $ B_1:=\sum_{|J|=q}\sum_{j=1}^{r}\|p_jf_J\|_{\phi_3}^2 $. 
	
	By virtue of $ F_k=\bar{p}_k-\sigma(\bar{p}_k)(\cdot\,,{\rm d}\phi_3) $, we have
	\begin{equation*}
		-\left[p_j,F_k\right]=-\left[p_j,\bar{p}_k-\sigma(\bar{p}_k)(\cdot\,,{\rm d}\phi_3)\right]=-\left[p_j,\bar{p}_k\right]+\left[p_j,\sigma(\bar{p}_k)(\cdot\,,{\rm d}\phi_3)\right],
	\end{equation*}
	which in turn implies
	\begin{align}
		&{\rm Re}\sum_{|I|=q-1}\sum_{j,k\notin 
			I}\left(-\left[p_j,F_k\right]f_{kI},f_{jI}\right)_{\phi_3}\notag\\
		=&{\rm Re}\sum_{|I|=q-1}\sum_{j,k\notin 
			I}\left(\left(-\left[p_j,\bar{p}_k\right]f_{kI},f_{jI}\right)_{\phi_3}+\left(\left[p_j,\sigma(\bar{p}_k)(\cdot\,,{\rm d}\phi_3)\right]f_{kI},f_{jI}\right)_{\phi_3}\right)\notag\\
		=&{\rm Re}\sum_{|I|=q-1}\sum_{j,k\notin 
			I}\left(\left(-\left[p_j,\bar{p}_k\right]f_{kI},f_{jI}\right)_{\phi_3}+\left(p_j{}\bar{p}_k\phi_3 
		f_{kI},f_{jI}\right)_{\phi_3}\right).\label{Re}
	\end{align} 
	The last equality is valid by the following observation
	\begin{align*}
		\left[p_j,\sigma(\bar{p}_k)(\cdot,\,{\rm d}\phi_3)\right]f&=\sum_{\mu=1}^{n}\sum_{\nu=1}^{n}\left[a_j^\mu(x)\partial_\mu,\bar{a}_k^\nu(x)\partial_\nu\phi_3\right]f\\
		&=\sum_{\mu=1}^{n}\sum_{\nu=1}^{n}a_j^\mu(x)\partial_\mu(\bar{a}_k^\nu(x)\partial_\nu\phi_3 f)-\bar{a}_k^\nu(x)\partial_\nu\phi_3 a_j^\mu(x)\partial_\mu f\\
		&=\sum_{\mu=1}^{n}\sum_{\nu=1}^{n}a_j^\mu(x)\partial_\mu(\bar{a}_k^\nu(x))\partial_\nu\phi_3 f+a_j^\mu(x)\bar{a}_k^\nu(x)\partial_\mu\partial_\nu(\phi_3)f\\
		&=\sum_{\mu=1}^{n}\sum_{\nu=1}^{n}a_j^\mu(x)\partial_\mu(\bar{a}_k^\nu(x)\partial_\nu\phi_3)f\\
		&=p_j\bar{p}_k\phi_3 f.
	\end{align*}
	Substituting ($ \ref{condition of the Lie braket} $) and (\ref{Re}) into (\ref{inequality}) gives 
	\begin{align*}
		{\rm Re}\sum_{|I|=q-1}\sum_{j,k\notin  I}\left(\left(p_j\bar{p}_k\phi_3+e_{jk}^l\bar{p}_l-d_{jk}^lp_l\right)f_{kI},f_{jI}\right)_{\phi_3}+B_1\leq 
		A_1+A_2+\|\kappa f\|_{\phi_3}^2,
	\end{align*}which implies by another integration by parts involving $ \bar{p}_lf_{kI} $ 
	\begin{align}
		{\rm Re}\sum_{|I|=q-1}\sum_{j,k\notin I}\left(\left(p_j\bar{p}_k\phi_3+e_{jk}^l\bar{p}_l\phi_3\right)f_{kI},f_{jI}\right)_{\phi_3}+B_1\notag\\
		\leq
		A_1+A_2+\|\kappa f\|_{\phi_3}^2+B_1^{\frac{1}{2}}\|\kappa f\|_{\phi_3}.\label{lower}
	\end{align}
	Combining (\ref{upper}) and (\ref{lower}), it follows that
	\begin{align*}
		&{\rm Re}\sum_{|I|=q-1}\sum_{j,k\notin I}\left(\left(p_j\bar{p}_k\phi_3+e_{jk}^l\bar{p}_l\phi_3\right)f_{kI},f_{jI}\right)_{\phi_3}\nonumber\\
		\leq 
		&\left(1+\epsilon\right)\left(\|T^*f\|_{\phi_1}^2+\|Sf\|_{\phi_3}^2\right)+\left(2+\frac{2}{\epsilon}\right)\|\sigma\left({}^t\mathcal{P}_q\right)\left(\cdot\,,{\rm d}\tau\right)f\|_{\phi_3}^2+\frac{1}{\epsilon}\|\kappa f\|_{\phi_3}^2.
	\end{align*}
	According to Lemma \ref{dense lemma}, the desired estimate holds for any $f\in{\rm Dom}\left(T^*\right)\cap{\rm Dom}\left(S\right)$.
\end{proof}

We are now in a position to prove Theorem \ref{exactness}.

\begin{proof}[\bf Proof of Theorem \ref{exactness}]
	Let $\tau\in C^\infty\left(\Omega\right)$ be a function satisfying (\ref{tau}). We will make use of weight functions $ \phi_1=\chi(\varphi)-2\tau,\phi_2=\chi(\varphi)-\tau,\phi_3=\chi(\varphi) $, where $ \chi\in C^\infty(\mathbb{R}) $ is a convex increasing function to be chosen in the sequel. 
	
	The form (\ref{quadratic form}) for $\chi(\varphi)$
	becomes
	\begin{align}\label{cqf}
		\chi'(\varphi)Q_{\varphi,x}(\xi,\xi)+
		\chi''(\varphi)|\xi_jp_j\varphi|^2,
	\end{align}
	where the second term is positive since $\chi$ is convex. According to Proposition \ref{bochner}, for any $f\in{\rm Dom}\left(T^*\right)\cap{\rm Dom}\left(S\right)$ we have
	\begin{align}
		&{\rm Re}\sum_{|I|=q-1}\sum_{j,k\notin I}\left(\chi'\left(\varphi\right)\left(p_j\bar{p}_k\varphi+e_{jk}^l\bar{p}_l\varphi\right)f_{kI},f_{jI}\right)_{\phi_3}\nonumber\\
		\leq 
		&\left(1+\epsilon\right)\left(\|T^*f\|_{\phi_1}^2+\|Sf\|_{\phi_3}^2\right)+\left(2+\frac{2}{\epsilon}\right)\|\sigma\left({}^t\mathcal{P}_q\right)\left(\cdot\,,{\rm d}\tau\right)f\|_{\phi_3}^2+\frac{1}{\epsilon}\|\kappa f\|_{\phi_3}^2.\label{same}
	\end{align}
    
    Since ${\rm rank}_{\mathbb{C}}Q_{\varphi,x}\geq r-q+1$, Lemma \ref{relationship of eigenvalues} shows that  there exists some positive $\sigma\in C\left(\Omega\right)$ such that (\ref{same}) is bounded from below by $$\int_\Omega \sigma\chi'(\varphi) |f|^2e^{-\chi(\varphi)}.$$ As  $ \varphi $ is an exhaustion function on $ \Omega $, we can choose $ \chi $ increasing rapidly such that 
    $$\sigma\chi'(\varphi)\geq \bigg(\frac{\kappa^2}{\epsilon}+\left(2+\frac{2}{\epsilon}\right)|\sigma\left({}^t\mathcal{P}_q\right)\left(\cdot\,,{\rm d}\tau\right)|^2+(1+\epsilon)\bigg)e^\tau,$$
    which gives the desired estimate in Lemma \ref{functional analysis}.
\end{proof}

\begin{definition}
	$\Omega$ is said to be $P$-convex, if there exists an exhaustion function $\varphi\in C^2\left(\Omega\right)$ such that $Q_{\varphi,x}$ is positive define for every $x\in \Omega$.
\end{definition}

Given a point $y\in \Omega$, let $\varphi\in C^2\left(\Omega\right)$ be an arbitrary function vanishing at $y$ up to first order derivatives, then we have
\begin{align}
	Q_{\varphi,y}(\xi,\xi)&={\rm Re}\left(p_j\bar{p}_k\varphi(y)\right)\xi_{j}\bar{\xi}_{k}\nonumber\\
	&={\rm Re}\left(a_j^\mu(y)\left(\partial_\mu\overline{a_k^\nu(y)}\right)\partial_\nu \varphi(y)+a_j^\mu(y)\overline{a_k^\nu(y)}\partial_\mu\partial_\nu \varphi(y)\right)\xi_{j}\bar{\xi}_{k}\nonumber\\
	&=a_j^\mu(y)\xi_j\overline{a_k^\nu(y)\xi_k}\partial_\mu\partial_\nu \varphi(y)\label{local}.
\end{align}
The above computation suggests the following assumption.

$\mathbf{(A2)^*}$ ${\rm rank}_{\mathbb{C}}\left(a_j^\nu(x)\right)_{1\leq \nu\leq n\atop1\leq j\leq r}=r$ holds for all $x\in\Omega$.\\
In view of (\ref{J'}), (A2)* is stronger than previous condition (A2).

Under the assumption (A2)*, it is clear from (\ref{local}) that for each point $y\in\Omega$, there is an open ball $B(y,\varepsilon)$ centered at $y$ of radius $\varepsilon$ such that the form $Q_{|x-y|^2,x}(\xi,\xi)$ is positive at $x\in B(y,\varepsilon)$, then the open ball $B(y,\varepsilon)$ is a $P$-convex neighborhood around $y$. As a consequence of (\ref{cqf}),
$$\varphi(x):=-\log(\varepsilon^2-|x-y|^2)$$
is an exhaustion function with positive definite quadratic form on $B(y,\varepsilon)$, which yields that $B(y,\varepsilon)$ is $P$-convex. Therefore we have the following application of Theorem \ref{exactness}.

\begin{cor}\label{local existence}
	Assume that $P=(P_1,\cdots,P_r)$ satisfies {\rm (A1)}, {\rm (A2)*} and {\rm (A3)}, we have
	\begin{enumerate}
		\item[(i)] The complex $ (\ref{resolution}) $ is a resolution of the solution sheaf $\mathcal{S}_P$ and the isomorphisms $ (\ref{ISO}) $ hold.
	    \item[(ii)] If there is an exhaustion function $\varphi\in C^2\left(\Omega\right)$ such that $Q_{\varphi,x}$ is positive semi-definite, ${\rm rank}_{\mathbb{C}}Q_{\varphi,x}\geq r-q+1$ for every $x\in \Omega$ and some $1\leq q\leq r$,  then $H^q(\Omega,\mathcal{S}_P)=0$. In particular, if $\Omega$ is $P$-convex then $H^q(\Omega,\mathcal{S}_P)=0$ for all $q\geq 1$.
	\end{enumerate}
\end{cor}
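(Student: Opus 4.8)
\textbf{Proof proposal for Corollary \ref{local existence}.}
The plan is to deduce both items from Theorem \ref{exactness} together with the sheaf-theoretic machinery already set up in Section 2, so very little genuinely new work is needed. For item~1, I would argue that the complex of sheaves $(\ref{resolution})$ is exact. Exactness at $\mathcal{L}$ is the definition of $\mathcal{S}_P=\ker\mathcal{P}_1$. For exactness at $\mathcal{L}^{\oplus\tbinom{r}{q}}$ with $q\geq 1$, I would pass to germs at an arbitrary point $y\in\Omega$: given a germ $[f]$ with $\mathcal{P}_{q+1}f=0$ on some neighborhood, use assumption (A2)* and the identity $(\ref{local})$ to select a $P$-convex neighborhood $\omega\ni y$ (replace $\varphi$ by a strictly convex quadratic in suitable coordinates so that $Q_{\varphi,x}$ is positive definite on $\omega$, hence of rank $r\geq r-q+1$); then Theorem \ref{exactness} applied on $\omega$ produces $u\in L^2_{loc}(\omega)^{\oplus\tbinom{r}{q-1}}$ with $\mathcal{P}_q u=f$ on $\omega$, i.e. $[f]=\mathcal{P}_q[u]$ in the stalk. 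Combined with Proposition 2.1 ($\mathcal{P}_{q+1}\circ\mathcal{P}_q=0$), this shows $(\ref{resolution})$ is a resolution of $\mathcal{S}_P$. Since $\mathcal{L}$ is a fine sheaf, each $\mathcal{L}^{\oplus\tbinom{r}{q}}$ is fine, hence acyclic, so this is an acyclic resolution and the abstract de Rham theorem yields the isomorphisms $(\ref{ISO})$ computing $H^q(\Omega,\mathcal{S}_P)$ as the cohomology of the complex of global sections $L^2_{loc}(\Omega)^{\oplus\tbinom{r}{\bullet}}$.

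For item~2, I would invoke item~1 to identify $H^q(\Omega,\mathcal{S}_P)$ with the quotient in $(\ref{ISO})$, namely $\ker(\mathcal{P}_{q+1})/\mathrm{Im}(\mathcal{P}_q)$ at the $q$-th spot of the global complex. The hypothesis is exactly that of Theorem \ref{exactness}: an exhaustion function $\varphi\in C^2(\Omega)$ with $Q_{\varphi,x}$ positive semi-definite and $\mathrm{rank}_{\mathbb{C}}Q_{\varphi,x}\geq r-q+1$ for all $x$. The theorem then says every global $f$ with $\mathcal{P}_{q+1}f=0$ is of the form $\mathcal{P}_q u$, i.e. the quotient vanishes, so $H^q(\Omega,\mathcal{S}_P)=0$. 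For the ``in particular'' clause, if $\Omega$ is $P$-convex then $Q_{\varphi,x}$ is positive definite, hence of full rank $r\geq r-q+1$ for every $q\geq 1$, so the rank condition holds simultaneously for all $q$ and the vanishing follows for all $q\geq 1$.

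The only point requiring a little care — and the main (mild) obstacle — is the passage from (A2)* to local $P$-convexity: one must check that $(\ref{local})$ really holds, i.e. that when $\varphi$ vanishes to first order at $y$ the terms involving $e_{jk}^l\bar p_l\varphi(y)$ and the first-order part of $p_j\bar p_k\varphi$ drop out, leaving the pure Hessian pairing against the vectors $\sum_\nu a_j^\nu(y)\partial_\nu$; this is immediate from $\bar p_l\varphi(y)=0$ and from $p_j\bar p_k\varphi(y)=\sum_{\mu,\nu}a_j^\mu(y)\overline{a_k^\nu(y)}\,\partial_\mu\partial_\nu\varphi(y)$ since the terms with a derivative falling on a coefficient carry a factor $\partial_\nu\varphi(y)=0$. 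Given (A2)*, the matrix $(a_j^\nu(y))$ has rank $r$, so the $r$ vectors $\sum_\nu a_j^\nu(y)\partial_\nu$ are linearly independent, and choosing $\varphi$ to be (in local coordinates centered at $y$) a sufficiently convex quadratic form makes $Q_{\varphi,y}$ positive definite; by continuity it stays positive definite, and $\varphi$ can be arranged to be exhausting, on a small enough neighborhood of $y$. Everything else is bookkeeping with fine resolutions, already standard once Theorem \ref{exactness} is in hand.
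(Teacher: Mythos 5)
Your proposal is correct and follows essentially the same route as the paper: the paper derives the corollary precisely by observing via $(\ref{local})$ that (A2)* makes every point admit a $P$-convex neighborhood, applying Theorem \ref{exactness} there to get exactness of the sheaf complex (hence the fine resolution and $(\ref{ISO})$), and then reading off item~2 from Theorem \ref{exactness} applied globally. Your added verification that the first-order terms drop out of $Q_{\varphi,y}$ and that the quadratic choice of $\varphi$ yields local $P$-convexity is exactly the detail the paper leaves implicit.
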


In the special case where $P_j(1\leq j \leq n)$ are complex vector fields(i.e., $P_j=p_j$), we have the following application to Treves complex.

\begin{cor}\label{lst}
	For every formally integrable structure $E\subseteq \mathbb{C}TX$ with vanishing Levi form, we have
	\begin{enumerate}
		\item[(i)] The Treves complex \eqref{new complex} always has local $L^2$-solvability, i.e.,
		for any point in $X$, there is an open neighborhood $U$ satisfying 
		\begin{align*}
			\forall f\in L_{loc}^2(U,\mathfrak{U}_E^q)\ \text{with}\ {\rm d}'f=0,\ \exists u\in L_{loc}^2(U,\mathfrak{U}_E^{q-1})\ \text{such that}\ {\rm d}'u=f\ \text{in}\ U
		\end{align*}
		for $1\leq q\leq r$, where $L_{loc}^2(U,\mathfrak{U}_E^q)$ is defined by
		\begin{align*}
			L_{loc}^2(U,\mathfrak{U}_E^q)=\{g\ {\rm mod}\ \mathfrak{N}_E^q(U)\ |\ g\in L_{loc}^2(U,\Lambda^q\mathbb{C}T^*X)\}.
		\end{align*}
		\item[(ii)] If we assume further that $E+\bar{E}$ has constant rank, then the Treves complex \eqref{new complex} is smoothly solvable in the sense of germs, i.e., for any point in $X$, there exist open neighborhoods $U'\subseteq U$ of that point satisfying 
		\begin{align*}
			\forall f\in\mathfrak{U}_E^q(U)\ \text{with}\ {\rm d}'f=0,\ \exists u\in\mathfrak{U}_E^{q-1}(U')\ \text{such that}\ {\rm d}'u=f\ \text{in}\ U'
		\end{align*}
		for $1\leq q\leq r$.
	\end{enumerate}
\end{cor}
\begin{proof}
	(i) Since $E\subseteq \mathbb{C}TX$ is a formally integrable subbundle, assumptions (A1) and (A2)* are automatically fulfilled. The assumption (A3) is equivalent to $\left[\Gamma(U,E),\Gamma(U,\bar{E})\right]\subseteq\Gamma(U,E)+\Gamma(U,\bar{E})$ which, by a simple linear algebra observation, means that $E$ has vanishing Levi form. Thus, the local $L^2$-solvability of the Treves complex follows directly from (\ref{T1}), (\ref{T2}) and the same arguments used in the proof of Corollary \ref{local existence}.
	
	(ii) If $E+\bar{E}$ has constant rank, i.e., it is a subbundle of $\mathbb{C}TX$, then $\left[\Gamma(U,E),\Gamma(U,\bar{E})\right]\subseteq\Gamma(U,E)+\Gamma(U,\bar{E})$ implies that such a formally integrable subbundle $E$ is a Levi flat structure, and therefore is locally integrable according to Nirenberg's theorem(\cite{N}). By means of Theorem VIII.9.1 in \cite{T2}, we obtain the smooth solvability in the sense of germs.
\end{proof}

There are obstructions to the local solvability of the Treves complex(cf. \cite{T2}, \cite{HM}, \cite{BCH}, \cite{Nf}, \cite{FF} and references therein), even if the subbundle $E$ is locally integrable. When the Levi form is non-degenerate, the so-called $Y(q)$-condition was shown to be the necessary and sufficient for  local solvability of the Treves complex(\cite{T1.5}). It was proven in \cite{FF} that the $Y(q)$-condition is also necessary for local solvability in the sense of ultra-distributions. There was a long-standing conjecture regarding the characterization of the local solvability of the Treves complex in terms of the reduced homology of the regular level sets of a first integral for locally integrable structures of corank one, with the characterization not involving any restrictions on the Levi forms of the underlying structures. The complete proof of this conjecture was presented in the influential paper \cite{CH}.

\section{Cousin problems}\label{s4}
Throughout this section, we assume that the conditions (A1), (A2)* and (A3) for differential operators $P_1,\cdots,P_r$ are fulfilled. We will consider both additive and multiplicative Cousin problems for the system 
\begin{equation}
	P_1 u=\cdots =P_ru=0.\label{hom}
\end{equation} 
We begin with the formulation of Cousin problems for the system.

\begin{definition}\label{Cousin I}
An additive Cousin datum of the system $(\ref{hom}) $ is a collection $\{\Omega_\alpha,u_{\alpha\beta}\}_{\alpha,\beta\in\Lambda}$ where $\{\Omega_\alpha\}_{\alpha\in\Lambda}$ is an open covering of $\Omega$, each $u_{\alpha\beta}\in L^2_{loc}(\Omega_\alpha\cap\Omega_\beta)$ satisfies $ (\ref{hom}) $ on $\Omega_\alpha\cap\Omega_\beta(\alpha,\beta\in\Lambda)$ and $$u_{\alpha\beta}+u_{\beta\gamma}+u_{\gamma\alpha}=0  \ {\rm on} \  \Omega_\alpha\cap\Omega_\beta\cap\Omega_\gamma(\alpha,\beta,\gamma\in\Lambda).$$
The Cousin problem for an additive Cousin datum $\{\Omega_\alpha,u_{\alpha\beta}\}_{\alpha,\beta\in\Lambda}$ is to find a sequence of solutions $u_\alpha\in L_{loc}^2(\Omega_\alpha)$ of $ (\ref{hom}) $, such that $u_{\alpha\beta}=u_\beta-u_\alpha$ hold on $\Omega_\alpha\cap\Omega_\beta$ for all $\alpha,\beta\in\Lambda$.
\end{definition}

We can construct global solutions of the overdetermined system $(\ref{OS}) $ from local ones by using solutions of the additive Cousin problem. 
\begin{prop}
Assume that the Cousin problem is always solvable for every additive Cousin datum of the system $(\ref{hom})$. Let $\{\Omega_\alpha\}_{\alpha\in\Lambda}$ be an open covering of $\Omega$ and  $v_\alpha\in \mathcal{D}'(\Omega_\alpha)$ be a solution of the system  $ (\ref{OS}) $$(\alpha\in\Lambda)$. If the difference $v_\alpha-v_\beta\in L^2_{loc}(\Omega_\alpha\cap\Omega_\beta)$ for all $\alpha,\beta\in\Lambda$, then there is a global solution $v\in L^2_{loc}\left(\Omega\right)$ of the system $ (\ref{OS}) $ such that $v|_{\Omega_\alpha}-v_\alpha\in L^2_{loc}(\Omega_\alpha)$ for each $\alpha\in\Lambda$.
\end{prop}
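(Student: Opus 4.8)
The plan is to reduce the assertion to the standing solvability hypothesis by packaging the family of differences as an additive Cousin datum for the homogeneous system $(\ref{hom})$. First I would set, for $\alpha,\beta\in\mathbb{Z}$, $u_{\alpha\beta}:=v_\alpha-v_\beta$ on $\Omega_\alpha\cap\Omega_\beta$, and verify the three requirements of an additive Cousin datum. That $u_{\alpha\beta}\in L^2_{loc}(\Omega_\alpha\cap\Omega_\beta)$ is exactly the hypothesis imposed on the differences. That $u_{\alpha\beta}$ satisfies $(\ref{hom})$ follows because the $v_\gamma$ all solve the \emph{same} inhomogeneous system $(\ref{OS})$, whence $P_ju_{\alpha\beta}=P_jv_\alpha-P_jv_\beta=f_j-f_j=0$ on $\Omega_\alpha\cap\Omega_\beta$. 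Finally the cocycle identity $u_{\alpha\beta}+u_{\beta\gamma}+u_{\gamma\alpha}=0$ on $\Omega_\alpha\cap\Omega_\beta\cap\Omega_\gamma$ is immediate. Thus $\{\Omega_\alpha,u_{\alpha\beta}\}_{\alpha,\beta\in\mathbb{Z}}$ is an additive Cousin datum of $(\ref{hom})$.

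Next I would invoke the assumption that every such datum is solvable: there are $u_\alpha\in L^2_{loc}(\Omega_\alpha)$ solving $(\ref{hom})$ with $u_{\alpha\beta}=u_\beta-u_\alpha$ on $\Omega_\alpha\cap\Omega_\beta$. Rearranging gives $v_\alpha+u_\alpha=v_\beta+u_\beta$ on every double overlap, so the local distributions $v_\alpha+u_\alpha\in\mathcal{D}'(\Omega_\alpha)$ are mutually compatible and, $\mathcal{D}'$ being a sheaf, glue to a single $v\in\mathcal{D}'(\Omega)$ with $v|_{\Omega_\alpha}=v_\alpha+u_\alpha$. On each $\Omega_\alpha$ one then has $P_jv=P_jv_\alpha+P_ju_\alpha=f_j+0=f_j$, so $v$ solves $(\ref{OS})$ on all of $\Omega$; and $v|_{\Omega_\alpha}-v_\alpha=u_\alpha\in L^2_{loc}(\Omega_\alpha)$, which is the required compatibility. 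Since local square-integrability is a local property and near every point $v$ coincides with some $v_\alpha+u_\alpha$, this also gives $v\in L^2_{loc}(\Omega)$.

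The argument carries essentially no analytic content: all the substance is delegated to the solvability hypothesis (which is where the exactness results of the previous section enter). The only steps that need attention are checking that the differences honestly obey the axioms of an additive Cousin datum — most notably $P_ju_{\alpha\beta}=0$, where it matters that the $v_\alpha$ solve a common system — and the bookkeeping showing that the glued object $v$ is well defined, solves $(\ref{OS})$, and has the stated local relation to the $v_\alpha$. I expect this regularity and gluing bookkeeping, rather than any real difficulty, to be the subtlest part of the write-up.
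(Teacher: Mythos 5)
Your argument is correct and is essentially identical to the paper's proof: form the datum $u_{\alpha\beta}=v_\alpha-v_\beta$, solve it by the hypothesis, and glue $v=v_\alpha+u_\alpha$. The only overreach is your final sentence: since each $v_\alpha$ is merely a distribution, $v_\alpha+u_\alpha$ need not lie in $L^2_{loc}$, so the glued $v$ is a priori only in $\mathcal{D}'(\Omega)$ --- which is in fact all the paper's own proof concludes (the ``$v\in L^2_{loc}(\Omega)$'' in the statement appears to be a slip; the substantive conclusion is $v|_{\Omega_\alpha}-v_\alpha=u_\alpha\in L^2_{loc}(\Omega_\alpha)$).
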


\begin{proof}
According to Definition \ref{Cousin I},  $\{\Omega_\alpha,(v_{\alpha}-v_{\beta})|_{\Omega_\alpha\cap\Omega_\beta}\}_{\alpha,\beta\in\Lambda}$  is an additive Cousin datum of  the system $(\ref{hom})$. Let $\{u_\alpha\}_{\alpha\in\Lambda}$ be a solution of the above additive Cousin datum, then $$u_\beta-u_\alpha = v_\alpha-v_\beta$$ hold on $\Omega_\alpha\cap\Omega_\beta$ for all $\alpha,\beta\in\Lambda.$ Hence, there is a global solution $v\in\mathcal{D}'\left(\Omega\right)$ of the system $(\ref{OS})$ such that
\begin{equation*}
	v|_{\Omega_\alpha} = u_\alpha+v_\alpha
\end{equation*} 
for each $\alpha\in\Lambda$.
\end{proof}

The assumption (A1) is equivalent to  $$[p_j,p_k]=c_{jk}^lp_l, \ \  p_ja^0_k-p_ka^0_j=c^l_{jk}a^0_l\  {\rm for} \ 1\leq i,j\leq r,$$ and therefore one can find(by Corollary \ref{local existence}) a function $\eta\in L_{loc}^2\left(\Omega\right)$ such that for $ 1\leq j\leq r$ 
\begin{equation}
	p_j\eta=a^0_j\label{eta1}
\end{equation}
when $\Omega$ is assumed to be $P$-convex. We will fix such an $\eta$ satisfying (\ref{eta1}) in the sequel and formulate the multiplicative Cousin Problem as below.

\begin{definition}
A multiplicative Cousin datum of $(\ref{hom}) $ is a collection $\{\Omega_\alpha,u_{\alpha\beta}\}_{\alpha,\beta\in\Lambda}$ where $\{\Omega_\alpha\}_{\alpha\in\Lambda}$ is an open covering of $\Omega$, each $u_{\alpha\beta}\in C^0(\Omega_\alpha\cap\Omega_\beta)$  satisfies $ (\ref{hom}) $ on $\Omega_\alpha\cap\Omega_\beta(\alpha,\beta\in\Lambda)$ and $$e^{3\eta}u_{\alpha\beta}u_{\beta\gamma}u_{\gamma\alpha}=1  \ {\rm on} \  \Omega_\alpha\cap\Omega_\beta\cap\Omega_\gamma(\alpha,\beta,\gamma\in\Lambda).$$
The Cousin problem for a multiplicative Cousin datum $\{\Omega_\alpha,u_{\alpha\beta}\}_{\alpha,\beta\in\Lambda}$ is to find a sequence of solutions $u_\alpha\in C^0(\Omega_\alpha)$ of  $ (\ref{hom}) $, such that each $u_\alpha$ is nowhere zero on $\Omega_\alpha$ and $$e^\eta u_{\alpha\beta}=\frac{u_\beta}{u_\alpha}$$ hold on $\Omega_\alpha\cap\Omega_\beta$ for all $\alpha,\beta\in\Lambda$, where $\eta$ is a solution of $ (\ref{eta1}) $.
\end{definition}

The solvability of the multiplicative Cousin problem enables us to patch together local solutions of the system (\ref{hom}) as follows. 
\begin{prop}
Assume that the Cousin problem is always solvable for every multiplicative Cousin datum of the system $(\ref{hom})$ and that the function $\eta\in C^0\left(\Omega\right)$. Let $\{\Omega_\alpha\}_{\alpha\in\Lambda}$ be an open covering of $\Omega$ and $v_\alpha\in C^0(\Omega_\alpha)$ be a solution  of $ (\ref{hom}) $ for each $\alpha\in\Lambda$, such that the quotients  
$\frac{v_\alpha}{v_\beta}\in C^0(\Omega_\alpha\cap\Omega_\beta) \ {\rm are \ nowhere \ zero}$
for all $\alpha,\beta\in\Lambda$, then there is a global solution $v\in C^0\left(\Omega\right)$ of the system $ (\ref{hom}) $ such that $\frac{v|_{\Omega_\alpha}}{v_\alpha} \in C^0(\Omega_\alpha)$ is nowhere zero for each $\alpha\in\Lambda$.
\end{prop}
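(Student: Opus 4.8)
The plan is to run the multiplicative analogue of the additive-Cousin gluing argument in the preceding proposition on the additive Cousin problem: out of the local solutions $v_\alpha$ we manufacture a multiplicative Cousin datum of $(\ref{hom})$, solve it by the standing hypothesis, and then patch the $v_\alpha$ together using the resulting $u_\alpha$ and the fixed function $\eta$.

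First I would set $u_{\alpha\beta}:=e^{-\eta}\,v_\alpha/v_\beta$ on each nonempty $\Omega_\alpha\cap\Omega_\beta$. Since $\eta\in C^0(\Omega)$ and $v_\alpha/v_\beta$ is continuous and nowhere zero, $u_{\alpha\beta}\in C^0(\Omega_\alpha\cap\Omega_\beta)$; the quotients telescope on triple overlaps, so $e^{3\eta}u_{\alpha\beta}u_{\beta\gamma}u_{\gamma\alpha}=1$; and $u_{\alpha\beta}$ solves $(\ref{hom})$ because $(\ref{eta1})$ gives $P_je^{-\eta}=-(p_j\eta)e^{-\eta}+a_j^0e^{-\eta}=0$, while $v_\alpha/v_\beta$ is killed by each principal part $p_j$ (as $p_jv_\gamma=-a_j^0v_\gamma$ for $\gamma=\alpha,\beta$), so $P_j\bigl((v_\alpha/v_\beta)e^{-\eta}\bigr)=(v_\alpha/v_\beta)\,P_je^{-\eta}=0$. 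Hence $\{\Omega_\alpha,u_{\alpha\beta}\}_{\alpha,\beta\in\mathbb{Z}}$ is a multiplicative Cousin datum, and by hypothesis there are nowhere-zero $u_\alpha\in C^0(\Omega_\alpha)$ with $P_ju_\alpha=0$ and $e^\eta u_{\alpha\beta}=u_\beta/u_\alpha$.

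Substituting the definition of $u_{\alpha\beta}$ into this last relation yields $v_\alpha/v_\beta=u_\beta/u_\alpha$, i.e. $u_\alpha v_\alpha=u_\beta v_\beta$ on $\Omega_\alpha\cap\Omega_\beta$. Therefore the continuous functions $e^\eta u_\alpha v_\alpha$ agree on all overlaps and glue to a single $v\in C^0(\Omega)$ with $v|_{\Omega_\alpha}=e^\eta u_\alpha v_\alpha$. Two points then finish the proof: (i) $v$ solves $(\ref{hom})$, since on $\Omega_\alpha$ both $u_\alpha$ and $v_\alpha$ are solutions, so $p_j(u_\alpha v_\alpha)=-2a_j^0u_\alpha v_\alpha$, whence by $(\ref{eta1})$ $P_j(e^\eta u_\alpha v_\alpha)=(a_j^0-2a_j^0+a_j^0)\,e^\eta u_\alpha v_\alpha=0$; and (ii) $v|_{\Omega_\alpha}/v_\alpha=e^\eta u_\alpha\in C^0(\Omega_\alpha)$ is nowhere zero, as required.

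The computations are all first order; the point that needs care is the bookkeeping of the weights $e^{\pm\eta}$. A product of $k$ solutions of $P_j=p_j+a_j^0$ satisfies $P_j(\cdot)=-(k-1)a_j^0(\cdot)$, so exactly one factor $e^{\eta}$ (with $p_j\eta=a_j^0$) must be inserted per ``extra'' factor; this explains the $e^{-\eta}$ in $u_{\alpha\beta}$, the $e^{3\eta}$ in the cocycle normalization (three ratios $v_\bullet/v_\bullet$), and the $e^{\eta}$ in the glued solution. The only genuinely analytic ingredient is that these Leibniz and quotient manipulations are legitimate for the merely continuous $v_\alpha$: this follows once one knows that the continuous functions annihilated by all the $p_j$ ($1\le j\le r$) form a sheaf of rings whose nowhere-vanishing sections form a sheaf of abelian groups (note that $e^{\eta}v_\alpha$ is such a function, which reduces everything to this class), and that is where I expect the main, though modest, effort to lie.
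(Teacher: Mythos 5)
Your proposal is correct and follows essentially the same route as the paper: you form the multiplicative Cousin datum $u_{\alpha\beta}=e^{-\eta}v_\alpha/v_\beta$ (the paper's $v_\alpha/(e^\eta v_\beta)$), solve it, and glue via $v|_{\Omega_\alpha}=e^\eta u_\alpha v_\alpha$, with the paper performing the same Leibniz computations through the conjugation identity $P_j=e^{-\eta}p_j(e^\eta\,\cdot\,)$ rather than directly.
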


\begin{proof}
We rewrite the identity (\ref{eta1}) as 
\begin{equation}
	P_j=e^{-\eta}p_j(e^\eta \ \cdot),  \ 1\leq j\leq r.\label{eta}
\end{equation} 
Then, it is easy to see that for $ 1\leq j\leq r $,
\begin{align*}
	P_j(\frac{v_\alpha}{e^{\eta}v_\beta})& =e^{-\eta}p_j(\frac{e^\eta v_\alpha}{e^{\eta}v_\beta})\\ &= \frac{P_jv_\alpha}{e^\eta v_\beta}-\frac{v_\alpha P_jv_\beta}{e^\eta v_\beta^2}=0 
\end{align*}
hold on $\Omega_\alpha\cap\Omega_\beta$ for all $\alpha,\beta\in\Lambda$. Consequently, we know that
\begin{equation}
	\{\Omega_\alpha,\big(\frac{v_{\alpha}}{e^{\eta}v_{\beta}}\big)|_{\Omega_\alpha\cap\Omega_\beta}\}_{\alpha,\beta\in\Lambda}\label{datum}
\end{equation}  
is a multiplicative Cousin datum of  the system $(\ref{hom})$. Let $\{u_\alpha\}_{\alpha\in\Lambda}$ be a solution of the above multiplicative Cousin datum (\ref{datum}), then 
$$\frac{u_\beta}{u_\alpha} = \frac{v_\alpha}{v_\beta}$$ hold
on $\Omega_\alpha\cap\Omega_\beta$ for all $\alpha,\beta\in\Lambda$. So, one can find a global function $v\in C^\infty\left(\Omega\right)$ satisfying 
\begin{equation*}
	v|_{\Omega_\alpha} =e^\eta u_\alpha v_\alpha
\end{equation*} 
for each $\alpha\in\Lambda$. Again, we know by (\ref{eta}) that on each $\Omega_\alpha$ 
\begin{align*}
	P_jv&=P_j(e^\eta u_\alpha v_\alpha)=e^{-\eta}p_j(e^{2\eta}u_\alpha v_\alpha)\\ &=e^\eta\left(v_\alpha P_ju_\alpha + u_\alpha P_jv_\alpha\right) =0 
\end{align*}
for $1\leq j\leq r$.
\end{proof}

Global exactness in Section \ref{s3}(Corollary \ref{local existence}) enables us to provide solutions of Cousin problems by a purely sheaf-theoretic approach.
\begin{thm}\label{Cousin}
When $\Omega$ is $P$-convex, we have 
\begin{enumerate}
	\item[(i)] The additive Cousin problem is always solvable.
    \item[(ii)] Under the condition that the principal part $(p_1,\cdots,p_r)$ of $P$ is $C^0$-hypoelliptic, the multiplicative Cousin problem is always solvable if and only if $H^2(\Omega,\mathbb{Z})=0$. Here, the $C^0$-hypoellipticity of $(p_1,\cdots,p_r)$ means that for any $u\in L^2_{loc}\left(\Omega\right)$ if $p_ju\in C^\infty\left(\Omega\right)$ for $1\leq j\leq r$ then $u\in C^0\left(\Omega\right)$.
\end{enumerate}	 
\end{thm}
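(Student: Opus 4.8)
The plan is to treat the two parts separately, reducing each to a cohomology vanishing statement that is already available. \emph{For part (1)}, I would first observe that an additive Cousin datum $\{\Omega_\alpha,u_{\alpha\beta}\}_{\alpha,\beta\in\mathbb{Z}}$ is nothing but a \v{C}ech $1$-cocycle of the solution sheaf $\mathcal{S}_P$ for the covering $\{\Omega_\alpha\}$: each $u_{\alpha\beta}$ lies in $L^2_{loc}(\Omega_\alpha\cap\Omega_\beta)$ and satisfies $Pu_{\alpha\beta}=0$, so it is a section of $\mathcal{S}_P$ there, and the triple-overlap relation forces $u_{\alpha\alpha}=0$, $u_{\beta\alpha}=-u_{\alpha\beta}$, hence $u_{\alpha\beta}-u_{\alpha\gamma}+u_{\beta\gamma}=0$, which is the \v{C}ech cocycle condition. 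Solving the Cousin problem is exactly writing this cocycle as the coboundary of a $0$-cochain $(u_\alpha)$ with $u_\alpha\in\mathcal{S}_P(\Omega_\alpha)$. Since the natural map $\check H^1(\{\Omega_\alpha\},\mathcal{S}_P)\to H^1(\Omega,\mathcal{S}_P)$ is injective for every covering, and $H^1(\Omega,\mathcal{S}_P)=0$ whenever $\Omega$ is $P$-convex by Corollary \ref{local existence}, the cocycle is a coboundary; this produces $u_\alpha\in L^2_{loc}(\Omega_\alpha)$ with $Pu_\alpha=0$ and $u_{\alpha\beta}=u_\beta-u_\alpha$, as required.

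\emph{For part (2)} the idea is to absorb the twist by $e^{\eta}$ and convert the multiplicative problem into an ordinary first cohomology computation for the principal part. I would first record that the tuple $p=(p_1,\dots,p_r)$ of principal parts again satisfies (A1), (A2)* and (A3): (A1) for $p$ is the identity displayed just before $(\ref{eta1})$, while (A2)*, (A3) and the quadratic form $Q_{\varphi,x}$ of Definition \ref{Quadratic Form} involve only the top-order coefficients $a_j^\nu$ and the operators $p_j,\bar p_j$; in particular ``$P$-convex'' is the same as ``$p$-convex''. Hence Corollary \ref{local existence} applies verbatim to $p$ and gives $H^q(\Omega,\mathcal{S}_p)=0$ for all $q\ge 1$, where $\mathcal{S}_p$ is the sheaf of germs of locally $L^2$ solutions of $p_1u=\dots=p_ru=0$. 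The $C^0$-hypoellipticity hypothesis is used twice: applied to the solution $\eta$ of $(\ref{eta1})$, whose existence is guaranteed because $\Omega$ is $P$-convex, it gives $\eta\in C^0(\Omega)$; and it identifies $\mathcal{S}_p$ with the sheaf $\mathcal{O}_p$ of germs of continuous solutions of the principal system. Writing $\mathcal{O}_p^{*}$ for the subsheaf of nowhere-vanishing germs, a sheaf of abelian groups under multiplication, the relation $(\ref{eta})$ shows that $w\mapsto e^{\eta}w$ turns continuous solutions of the principal system into continuous solutions of $(\ref{hom})$ and conversely; consequently a multiplicative Cousin datum $\{\Omega_\alpha,u_{\alpha\beta}\}$ becomes, via $w_{\alpha\beta}:=e^{\eta}u_{\alpha\beta}$, an honest \v{C}ech $1$-cocycle with values in $\mathcal{O}_p^{*}$ — the factor $e^{3\eta}$ is precisely what makes $w_{\alpha\beta}w_{\beta\gamma}w_{\gamma\alpha}=1$ — and a solution $(u_\alpha)$ of the Cousin problem is exactly a splitting $w_{\alpha\beta}=U_\beta/U_\alpha$ with $U_\alpha:=e^{\eta}u_\alpha\in\mathcal{O}_p^{*}(\Omega_\alpha)$.

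It then remains to show that, for $P$-convex $\Omega$, every $\mathcal{O}_p^{*}$-valued $1$-cocycle splits if and only if $H^2(\Omega,\mathbb{Z})=0$, and this I would deduce from the exponential sheaf sequence $0\to\underline{\mathbb{Z}}\to\mathcal{O}_p\to\mathcal{O}_p^{*}\to0$ on $\Omega$, the second map being $f\mapsto e^{2\pi i f}$. As $\Omega\subseteq\mathbb{R}^n$ is paracompact, \v{C}ech and sheaf cohomology coincide, and the long exact sequence together with $H^1(\Omega,\mathcal{O}_p)=H^2(\Omega,\mathcal{O}_p)=0$ yields an isomorphism $H^1(\Omega,\mathcal{O}_p^{*})\xrightarrow{\ \sim\ }H^2(\Omega,\mathbb{Z})$. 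If $H^2(\Omega,\mathbb{Z})=0$ then $H^1(\Omega,\mathcal{O}_p^{*})=0$, so the cocycle $(w_{\alpha\beta})$ coming from any datum is already a coboundary over $\{\Omega_\alpha\}$ and $u_\alpha:=e^{-\eta}U_\alpha$ solves the Cousin problem; conversely, given a class in $H^2(\Omega,\mathbb{Z})$, represent the corresponding class of $H^1(\Omega,\mathcal{O}_p^{*})$ by a cocycle $(w_{\alpha\beta})$ over some covering, turn it into a multiplicative Cousin datum via $u_{\alpha\beta}:=e^{-\eta}w_{\alpha\beta}$, and a solution of that datum forces the cocycle, hence the original class, to vanish. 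The step I expect to be the main obstacle is the construction and exactness of the exponential sequence: both that $f\mapsto e^{2\pi i f}$ actually maps $\mathcal{O}_p$ into $\mathcal{O}_p^{*}$ and that it is locally surjective rest on the chain rule $p_j(\Phi\circ u)=\Phi'(u)\,p_ju$ for merely continuous (not necessarily smooth) solutions $u$, with $\Phi$ a branch of $\exp$ or $\log$; I would prove this by mollifying, invoking Friedrichs' commutator lemma to see $p_j(u*\rho_\varepsilon)\to 0$ in $L^2_{loc}$, and passing to the limit, using contractibility of small balls to produce a continuous local branch of $\log$. The remainder is routine: the homological bookkeeping in the long exact sequence and the elementary checks that multiplication by $e^{\pm\eta}$ respects $p_j$, $P_j$, continuity and non-vanishing.
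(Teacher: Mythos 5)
Your treatment of part (1) coincides with the paper's: an additive Cousin datum is an $\mathcal{S}_P$-valued \v{C}ech $1$-cocycle, its splitting is equivalent to the vanishing of its class in $H^1(\{\Omega_\alpha\},\mathcal{S}_P)$, and this injects into $H^1(\Omega,\mathcal{S}_P)$, which vanishes by Corollary \ref{local existence}.

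For part (2) your overall strategy — an exponential sheaf sequence combined with $H^1=H^2=0$ for the structure sheaf — is also the paper's, but you execute it after untwisting by the gauge factor $e^{\eta}$, whereas the paper keeps the twist. Concretely, the paper equips $\mathcal{S}_P$ with the product $u\star v=e^{\eta}uv$, forms the sheaf $\mathcal{S}_P^{\star}$ of $\star$-invertible germs, and uses the twisted exponential sequence (\ref{twisted}); you instead use the sheaf isomorphism $u\mapsto e^{\eta}u$ from $\mathcal{S}_P$ onto the solution sheaf of the principal-part system (a consequence of (\ref{eta})), which carries $\star$ to ordinary multiplication, carries the $e^{3\eta}$-cocycle condition to an honest $\mathcal{O}_p^{*}$-valued cocycle, and reduces everything to the standard exponential sequence $0\to\mathbb{Z}\to\mathcal{O}_p\to\mathcal{O}_p^{*}\to0$. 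The two arguments are conjugate under this isomorphism, so the mathematical content is the same; your version buys cleaner bookkeeping (no twisted identity $e^{-\eta}$, no twisted inverse (\ref{inverse})) at the small cost of checking that (A1)--(A3), (A2)$^*$ and $P$-convexity pass to the principal part — which they do, as you observe, and which one could also bypass by transporting $H^{q}(\Omega,\mathcal{S}_P)=0$ directly along the isomorphism. A genuine merit of your write-up is that you isolate the step the paper passes over in silence, namely the exactness of the exponential sequence for merely continuous $L^2_{loc}$ solutions (the distributional chain rule for $p_j$ and the local existence of continuous logarithms annihilated by the $p_j$); your mollification--Friedrichs plan is the right way to supply it. Two small points to record explicitly: $\eta\in C^0(\Omega)$ by the hypoellipticity hypothesis applied to (\ref{eta1}), and the cocycle condition with $\gamma=\alpha$ already forces each $u_{\alpha\beta}$ to be nowhere vanishing, so the transformed datum genuinely lands in $\mathcal{O}_p^{*}$.
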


\begin{proof}
(i) In view of Definition \ref{Cousin I}, we know that an additive Cousin datum $\{\Omega_\alpha,u_{\alpha\beta}\}_{\alpha,\beta\in\Lambda}$ defines an element 
\begin{equation*}
	[\{u_{\alpha\beta}\}_{\alpha,\beta\in\Lambda}]\in H^1(\{\Omega_\alpha\}_{\alpha\in\Lambda},\mathcal{S}_P),
\end{equation*} 
and that the Cousin problem for the datum $\{\Omega_\alpha,u_{\alpha\beta}\}_{\alpha,\beta\in\Lambda}$ has a solution if and only if $0=[\{u_{\alpha\beta}\}_{\alpha,\beta\in\Lambda}]\in H^1(\{\Omega_\alpha\}_{\alpha\in\Lambda},\mathcal{S}_P).$ Therefore, (1) follows from Corollary \ref{local existence}.

(ii) Under the hypoellipticity assumption, the function $\eta$ satisfying (\ref{eta1}) must be continuous.
As a consequence of the identity (\ref{eta}), $C^0$-hypoellipticity of the principal part $(p_1,\cdots,p_r)$ implies
\begin{equation}
	\mathcal{S}_P\subseteq\mathcal{C},\label{elliptic}
\end{equation} 
where $\mathcal{C}$ is the sheaf of germs of continuous functions over $\Omega$. 

We define a multiplication on $\mathcal{C}$ as follows
\begin{equation}
	u\star v:=e^\eta uv.\label{star}
\end{equation}
By virtue of (\ref{eta}), we have  for $1\leq j\leq r$
\begin{equation} 
	P_j(u\star v)=P_ju\star v+u\star P_jv.\label{leb}
\end{equation}
Since the multiplicative identity of $\star$ is given by $\frac{1}{e^{\eta}}$ which is annihilated by every $P_j$(by (\ref{eta})), it follows from (\ref{leb}) that $\star$ induces a multiplication on $\mathcal{S}_P$.  In order to deal with the multiplicative Cousin problem, we introduce(by using (\ref{elliptic})) the sheaf $\mathcal{S}^\star_P$ of invertible germs(according to the multiplication $\star$) of $\mathcal{S}_P$. By definition, $\mathcal{S}^\star_P$ is a sheaf of Abelian groups with respect to the multiplication $\star$ defined by (\ref{star}). It is clear that  $\mathcal{S}^\star_P$ consists of germs of $\mathcal{S}_P$ which are nowhere zero. Moreover, for any $u\in \mathcal{S}^\star_P$, we denote by $u^{-1}$ the inverse of $u$ with respect to the multiplication $\star$. The definition of $\star$ yields
\begin{equation}
	u^{-1}=\frac{1}{e^{2\eta} u}.\label{inverse}
\end{equation}

Again by the definition (\ref{star}) of the multiplication $\star$, we have the following sheaf-morphism 
\begin{eqnarray*}
	{\rm Exp}^\eta: \mathcal{S}_P & \rightarrow & \mathcal{S}_P^\star \notag \\ u &\mapsto &e^{2\pi\sqrt{-1}e^\eta u-\eta},
\end{eqnarray*} 
whose kernel $\mathcal{K}_{{\rm Exp}^\eta}\cong$ the constant sheaf $\Lambda$ over $\Omega$. It gives us the following twisted exponential sequence 
\begin{equation}
	0\longrightarrow \Lambda\stackrel{e^{-\eta}\cdot}{\longrightarrow}\mathcal{S}_P\stackrel{{\rm Exp}^\eta}{\longrightarrow}\mathcal{S}_P^\star\longrightarrow0,\label{twisted}
\end{equation}
which is exact because of (\ref{elliptic}). The twisted exponential sequence (\ref{twisted}) implies the following exact sequence
\begin{equation}
	H^1(\Omega,\mathcal{S}_P)\rightarrow H^1(\Omega,\mathcal{S}_P^\star)\rightarrow H^2(\Omega,\mathbb{Z})\rightarrow H^2(\Omega,\mathcal{S}_P).
	\label{e3}
\end{equation} 
From Corollary \ref{local existence}, it follows that $$H^1(\Omega, \mathcal{S}_P)=H^2(\Omega, \mathcal{S}_P)=0$$ since $\Omega$ is $P$-convex. Hence, we obtain an isomorphism from (\ref{e3}) 
\begin{equation} 
	H^1(\Omega,\mathcal{S}_P^\star)\cong H^2(\Omega,\mathbb{Z}).\label{is}
\end{equation}

For every multiplicative Cousin datum $\{\Omega_\alpha,u_{\alpha\beta}\}_{\alpha,\beta\in\Lambda}$,
$$u_{\alpha\beta}\star u_{\beta\gamma}\star u_{\gamma\alpha}=\frac{1}{e^\eta}({\rm the \ multiplicative \ identity \ of} \ \star )$$ 
on $\Omega_\alpha\cap\Omega_\beta\cap\Omega_\gamma(\alpha,\beta,\gamma\in\Lambda)$, i.e., $\{\Omega_\alpha,u_{\alpha\beta}\}_{\alpha,\beta\in\Lambda}$ defines an element 
\begin{equation*}
	[\{u_{\alpha\beta}\}_{\alpha,\beta\in\Lambda}]\in H^1(\{\Omega_\alpha\}_{\alpha\in\Lambda},\mathcal{S}^\star_P).
\end{equation*} 
By the identity (\ref{inverse}), $\{u_\alpha\}_{\alpha\in\Lambda}$ is a solution of the multiplicative Cousin datum $\{\Omega_\alpha,u_{\alpha\beta}\}_{\alpha,\beta\in\Lambda}$ exactly means that $u_\alpha\in \Gamma(\Omega_\alpha,\mathcal{S}_P^\star)$ and 
\begin{equation}
	u_{\alpha\beta}=\frac{u_\beta}{e^\eta u_\alpha}=u_\beta\star(u_\alpha)^{-1}.\label{sol}
\end{equation} 
The desired conclusion follows from (\ref{is})$\sim$(\ref{sol}).
\end{proof}



\bigskip
Finally, we mention another application of the additive Cousin problem for an elliptic differential operator $P=(P_1,\cdots,P_r)$, i.e., $\{p_1,\cdots,p_r\}$ spans an elliptic structure over $\Omega$ (see page 15 in \cite{BCH} for the definition of elliptic structures). By the ellipticity assumption, we can choose some solution $\eta\in C^\infty\left(\Omega\right)$ of the system (\ref{eta1}) if $\Omega$ is assumed to be $P$-convex.

Let $F\in C^\infty\left(\Omega\right)$ be a solution of the system $P_1F=\cdots =P_rF=0$ with the property that $${\rm d}F\neq 0 \ {\rm holds \ on}\  Z:=F^{-1}(0).$$ Obviously, 
\begin{align}\label{eF}
        {\rm d}\left(e^\eta F\right)=e^\eta {\rm d}F\neq 0\ {\rm on}\ Z=\left(e^\eta F\right)^{-1}(0).
\end{align}
Under the ellipticity assumption, it is known that at each point of $\Omega$ there is a coordinate chart
\begin{align}\label{lcn.}
	(U; x_1,\cdots, x_{r-s}, y_1, \cdots, y_{r-s}, t_1,\cdots, t_s)\ (2r-s=n)
\end{align}
such that 
\begin{equation}
	\{p_1,\cdots,p_r\} \ {\rm and} \  \{\partial_{\bar{z}_1},\cdots,\partial_{\bar{z}_{r-s}},\partial_{t_1},\cdots\partial_{t_s}\} \ {\rm span \ the \ same \ subbundle},\label{B}
\end{equation} 
where $p_j$ is the principal part of $P_j$ for $1\leq j\leq r$ and  $z_i=x_i+\sqrt{-1}y_i$ for $1\leq i\leq r-s$(cf. \cite{N}). Due to the existence of such coordinate charts, it follows from (\ref{eta}) and (\ref{eF}) that $Z$ is a closed submanifold of $\Omega$ with real codimension two. Let $E\subseteq\mathbb{C}T\Omega$ be the subbundle spanned by $\{p_1,\cdots,p_r\}$. Combining (\ref{B}) and the condition $P_1F=\cdots =P_rF=0$, we know that 
\begin{equation}
	E_{{}_Z}:=E\cap\mathbb{C}TZ\label{E}
\end{equation} 
defines a subbundle of  $\mathbb{C}TZ$, i.e., $Z$ is a compatible submanifold of $E$. 

A smooth function on $Z$ is said to be a first integral of $E_{{}_Z}$ if it is annihilated by sections of $E_{{}_Z}$. Taking into account of (\ref{eta}), it is natural to consider the problem of extending a first integral $g_{{}_Z}\in C^\infty(Z)$ of $E_{{}_Z}$ to a function $g\in C^\infty\left(\Omega\right)$ such that $P\left(e^{-\eta}g\right)=0$.  

By using a coordinate chart $ U $ as described in (\ref{lcn.}) and (\ref{B}) which has non-empty intersection with $ Z$, $g_{{}_Z}|_{{}_{Z\cap U}}$ can be extended to a function $ g_{{}_U}\in C^\infty(U)$ such that $P\left(e^{-\eta}g_{{}_U}\right)=0$ holds on $U$. Hence, we can choose an open covering  $ \lbrace\Omega_\alpha\rbrace_{\alpha\in\Lambda} $ of $\Omega$ such that for each $\Omega_\alpha$ intersecting $Z$ there is a smooth function $g_\alpha\in C^\infty(\Omega_\alpha)$ such that $g_\alpha|_{{}_{\Omega_\alpha\cap Z}} = g_{{}_Z}|_{{}_{\Omega_\alpha\cap Z}}$ and $e^{-\eta} g_\alpha\in\Gamma(\Omega_\alpha,\mathcal{S}_P)$. For $\alpha,\beta\in\Lambda$, set  $$ h_{\alpha\beta}:=\frac{g_\alpha-g_\beta}{e^{2\eta} F}\in \Gamma(\Omega_\alpha\cap\Omega_\beta,\mathcal{S}_P) $$ where $g_\alpha:=0$ if $ \Omega_\alpha\cap Z=\emptyset $. It is easy to see that $\{\Omega_\alpha,h_{\alpha\beta}\}_{\alpha,\beta\in\Lambda}$ defines an additive Cousin datum. By (i) of Theorem \ref{Cousin}, one can find some $ h_\alpha\in\Gamma(\Omega_\alpha,\mathcal{S}_P) $ for each $\alpha\in \Lambda$ such that 
$$ h_\beta-h_\alpha=\frac{g_\alpha-g_\beta}{e^{2\eta} F} \ {\rm on} \ \Omega_\alpha\cap\Omega_\beta(\alpha,\beta\in\Lambda).$$
Thus, $ \{g_\alpha+e^{2\eta} h_\alpha F\}_{\alpha\in\Lambda} $ patches together to give an extension $g\in C^\infty\left(\Omega\right)$ of $ g_{{}_Z} $. By the construction of $g$, we know that on each $\Omega_\alpha$
\begin{align*}
	P_j\left(e^{-\eta}g\right)&= P_j\left(e^{-\eta}g_\alpha+e^\eta h_\alpha F\right) \\ &=P_j\left(e^\eta h_\alpha F\right)\\ &= e^\eta FP_j\left(h_\alpha\right) + h_\alpha p_j\left(e^\eta F\right)\\ &= e^\eta h_\alpha P_j\left(F\right)=0,\ 1\leq j\leq r.
\end{align*}
The following proposition records our discussion on the extension of a first integral of $E_{{}_Z}$.
\begin{prop}
Assume that $ \Omega $ is $ P $-convex, $ P=(P_1,\cdots,P_r) $ is elliptic and that $F\in C^\infty\left(\Omega\right)$ is a solution of the homogenous system $(\ref{hom})$ satisfying ${\rm d}F\neq 0$ on $Z:=F^{-1}(0)$. Then for each first integral $g_{{}_Z}\in C^\infty(Z)$ of $E_{{}_Z}$, there exists a function $g\in C^\infty\left(\Omega\right)$ such that $g|_{{}_Z}=g_{{}_Z}$ and $P\left(e^{-\eta} g\right)=0$ on $\Omega$, where $E_{{}_Z}$ is the subbundle of $\mathbb{C}TZ$ defined by $ (\ref{E}) $ and $\eta$ is a solution of $(\ref{eta1})$.
\end{prop}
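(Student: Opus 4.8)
The plan is to reduce the extension problem to an additive Cousin problem for the homogeneous system $(\ref{hom})$ and then to invoke part $(1)$ of Theorem \ref{Cousin}. As a preliminary, using $P$-convexity together with the ellipticity of $P$, I would fix a solution $\eta\in C^\infty(\Omega)$ of $(\ref{eta1})$, so that $P_j=e^{-\eta}p_j(e^\eta\,\cdot\,)$ for $1\le j\le r$. Two structural facts about $Z=F^{-1}(0)$ are needed, and both follow from the normal coordinate charts of $(\ref{lcn.})$, $(\ref{B})$ supplied by ellipticity together with $(\ref{eta})$ and $(\ref{eF})$: first, $Z$ is a closed submanifold of $\Omega$ of real codimension two; second, $E_{{}_Z}=E\cap\mathbb{C}TZ$ is a subbundle of $\mathbb{C}TZ$, so that a first integral of $E_{{}_Z}$ makes sense as a smooth function on $Z$ annihilated by the sections of $E_{{}_Z}$.

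The local step comes next. Near a point of $Z$, take a chart $U$ as in $(\ref{lcn.})$, $(\ref{B})$. In these coordinates the equation $P(e^{-\eta}\,\cdot\,)=0$ says exactly that the function is annihilated by $\partial_{\bar z_1},\dots,\partial_{\bar z_{r-s}},\partial_{t_1},\dots,\partial_{t_s}$, hence is holomorphic in $z$ and independent of $t$; moreover $e^\eta F$ is such a solution and, since $d(e^\eta F)\ne0$ on $Z$, the set $Z\cap U$ is the complex hypersurface $\{e^\eta F=0\}$. The restriction of $g_{{}_Z}$ to $Z\cap U$ is then a holomorphic, $t$-independent function on this hypersurface, and a standard extension argument in several complex variables (using $e^\eta F$ as one holomorphic coordinate) produces a holomorphic, $t$-independent extension to a possibly smaller $U$; multiplying by $e^\eta$ yields $g_{{}_U}\in C^\infty(U)$ with $g_{{}_U}|_{Z\cap U}=g_{{}_Z}|_{Z\cap U}$ and $P(e^{-\eta}g_{{}_U})=0$ on $U$.

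Then I would globalize. Choose an open covering $\{\Omega_\alpha\}_{\alpha\in\mathbb{Z}}$ of $\Omega$ refining such charts, with $g_\alpha\in C^\infty(\Omega_\alpha)$ as above when $\Omega_\alpha\cap Z\ne\emptyset$ and $g_\alpha:=0$ otherwise, so that $e^{-\eta}g_\alpha\in\Gamma(\Omega_\alpha,\mathcal{S}_P)$ and $g_\alpha|_{\Omega_\alpha\cap Z}=g_{{}_Z}|_{\Omega_\alpha\cap Z}$. On each overlap $g_\alpha-g_\beta$ vanishes on $Z$; since $F$ vanishes on $Z$ to first order with non-vanishing differential, the quotient
$$h_{\alpha\beta}:=\frac{g_\alpha-g_\beta}{e^{2\eta}F}$$
is smooth, and from $p_j(g_\alpha-g_\beta)=0$ and $p_j(e^\eta F)=0$ (the latter because $F$ solves $(\ref{hom})$) one checks that $h_{\alpha\beta}\in\Gamma(\Omega_\alpha\cap\Omega_\beta,\mathcal{S}_P)$, while the cocycle identity $h_{\alpha\beta}+h_{\beta\gamma}+h_{\gamma\alpha}=0$ is immediate. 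Thus $\{\Omega_\alpha,h_{\alpha\beta}\}$ is an additive Cousin datum of $(\ref{hom})$, and since $\Omega$ is $P$-convex, part $(1)$ of Theorem \ref{Cousin} provides $h_\alpha\in\Gamma(\Omega_\alpha,\mathcal{S}_P)$ with $h_\beta-h_\alpha=h_{\alpha\beta}$. The functions $g_\alpha+e^{2\eta}h_\alpha F$ then agree on overlaps and glue to a global $g\in C^\infty(\Omega)$; one has $g|_Z=g_{{}_Z}$ because $F|_Z=0$, and the identities $P_j=e^{-\eta}p_j(e^\eta\,\cdot\,)$, $P_jh_\alpha=0$ and $p_j(e^\eta F)=0$ give $P_j(e^{-\eta}g)=P_j(e^\eta h_\alpha F)=e^\eta F\,P_jh_\alpha+h_\alpha\,p_j(e^\eta F)=0$ on each $\Omega_\alpha$.

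I expect the main obstacle to lie in the local step, which combines the ellipticity normal form with a clean extension of a holomorphic function off the hypersurface $Z\cap U$; verifying that $h_{\alpha\beta}$ is a genuine smooth section of $\mathcal{S}_P$, i.e. that the division by $F$ is legitimate and preserves the equation, is the other point requiring care. Once these are in place, the passage to a global solution is purely cohomological, resting on the vanishing $H^1(\Omega,\mathcal{S}_P)=0$ that underlies Theorem \ref{Cousin}$(1)$.
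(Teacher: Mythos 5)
Your proposal follows essentially the same route as the paper: the same local extension in the Nirenberg normal coordinates, the same additive Cousin datum $h_{\alpha\beta}=(g_\alpha-g_\beta)/(e^{2\eta}F)$, the same appeal to Theorem \ref{Cousin}(1), and the same gluing $g_\alpha+e^{2\eta}h_\alpha F$ with the identical final computation of $P_j(e^{-\eta}g)$. The only quibble is in your local step: since $P(e^{-\eta}g_{{}_U})=e^{-\eta}p(g_{{}_U})$ by $(\ref{eta})$, the holomorphic, $t$-independent extension of $g_{{}_Z}|_{{}_{Z\cap U}}$ already serves as $g_{{}_U}$ and the final ``multiplying by $e^\eta$'' is both unnecessary and would spoil the boundary condition $g_{{}_U}|_{{}_{Z\cap U}}=g_{{}_Z}|_{{}_{Z\cap U}}$.
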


\subsection*{Acknowledgement}
We would like to thank Prof. F. Treves for his interest in our work. We thank Prof. Paulo D. Cordaro for his valuable suggestions, which enhanced the presentation, especially on the improvement of Corollary \ref{lst}. Finally, we thank the anonymous referee for helpful comments.

\end{document}